\documentclass[11pt,thmsa,a4paper,reqno]{article}

\usepackage[mathscr]{euscript}
\usepackage{amssymb}
\usepackage{amsthm}
\usepackage{amsxtra}
\usepackage{bbm}
\usepackage{mathtools}
\usepackage{stmaryrd}

\usepackage{extarrows}
\usepackage{enumitem}
\usepackage{tikz-cd}
\usepackage{url}
\usepackage{booktabs}
\usepackage{graphicx}
\usepackage{resizegather}
\usepackage{arydshln}
\usepackage{floatrow}
\floatsetup[table]{capposition=top}
\usepackage{soul}
\usepackage{relsize,etoolbox}
 \usepackage[all,cmtip,2cell]{xy}
\UseAllTwocells
\xyoption{2cell}

\numberwithin{equation}{section}

\usepackage[pdftex,
                paper=a4paper,
                portrait=true,
                textwidth=165mm,
                textheight=235mm,
                tmargin=2.5cm,
                marginratio=1:1]{geometry}

\makeatletter
\newtheorem*{rep@theorem}{\rep@title}
\newcommand{\newreptheorem}[2]{%
\newenvironment{rep#1}[1]{%
 \def\rep@title{#2 \ref{##1}}%
 \begin{rep@theorem}}%
 {\end{rep@theorem}}}
\makeatother


\newtheorem{theorem}{Theorem}[section]
\newreptheorem{theorem}{Theorem}
\newtheorem{proposition}[theorem]{Proposition}
\newtheorem{lemma}[theorem]{Lemma}

\newtheorem{corollary}[theorem]{Corollary}
\newreptheorem{corollary}{Corollary}

\theoremstyle{definition}

\theoremstyle{remark}
\newtheorem{remark}[theorem]{Remark}
 \newtheorem*{note}{Note added}

\newtheorem*{acknowledgements}{Acknowledgements}
\newtheorem*{NotationConventions}{Notation and conventions}

\newcommand{\RR}{\ensuremath{\mathbb{R}}}

\newcommand{\ZZ}{\ensuremath{\mathbb{Z}}}

\newcommand{\Ccal}{\EuScript{C}}
\newcommand{\Dcal}{\EuScript{D}}
\newcommand{\Ecal}{\EuScript{E}}

\newcommand{\ud}{\operatorname{d}\!}
\newcommand{\ui}{\operatorname{i}}

\newcommand{\End}{\operatorname{End}}
\newcommand{\Hom}{\operatorname{Hom}}
\newcommand{\Loc}{\operatorname{\mathbf{Loc}}}

\newcommand{\uZ}{\operatorname{Z}}
\newcommand{\Ob}{\operatorname{Ob}}

\newcommand{\DGVect}{\operatorname{\mathbf{DGVect}}}
\newcommand{\AFun}{\operatorname{\mathsf{A}_{\infty}\text{-}\mathbf{Fun}}}
\newcommand{\A}{\mathsf{A}}
\def\id{{\rm id}}
\def\uud{{\rm d}}

\newcommand*{\sbullet}{\raisebox{0.1ex}{\scalebox{0.6}{$\bullet$}}}

\DeclareMathAlphabet{\mathbfit}{OML}{cmm}{b}{it}

\begin{document}

\title{An $\A_{\infty}$ version of the Poincar\'e lemma}

\author{Camilo Arias Abad\footnote{Escuela de Matem\'{a}ticas, Universidad Nacional de Colombia Sede Medell\'{i}n, email: camiloariasabad@gmail.com}, Alexander Quintero V\'{e}lez\footnote{Escuela de Matem\'{a}ticas, Universidad Nacional de Colombia Sede Medell\'{i}n, email: aquinte2@unal.edu.co} and Sebasti\'an V\'elez V\'asquez\footnote{Escuela de Matem\'{a}ticas, Universidad Nacional de Colombia Sede Medell\'{i}n, email: svelezv@unal.edu.co}}

 \maketitle

\begin{abstract}
We prove a categorified version of the Poincar\'e lemma. The natural setting for our result is that of $\infty$-local systems. More precisely, we show that any smooth homotopy between maps $f$ and $g$ induces an $\A_\infty$-natural transformation between the corresponding pullback functors. This transformation is explicitly defined in terms of Chen's iterated integrals.
In particular, we show that a homotopy equivalence induces a quasi-equivalence on the DG categories of
$\infty$-local system.
\end{abstract}


\section{Introduction}
Higher versions of local systems on a smooth manifold has been considered in several recent works. Some of the references include  Block-Smith~\cite{Block-Smith2014}, Arias~Abad-Sch\"{a}tz~\cite{Abad-Schatz2016,Abad-Schatz2013}, Holstein~\cite{Holstein2015}, Malm~\cite{Malm2011}, Ben-Zvi-Nadler~\cite{Ben-Zvi-Nadler2012}, Brav-Dyckerhoff~\cite{Brav-Dyckerhoff2016} and Rivera-Zeinalian~\cite{Rivera-Zeinalian2018}. These references contain different points of view on such $\infty$-local systems, as they are now called. Crucially, each of the points of view can be used to define a DG category of $\infty$-local systems, and it has been shown that all the resulting DG categories are $\A_{\infty}$-quasi-equivalent \cite{Holstein2015,Abad-Schatz2013,Block-Smith2014}.

In this paper, we take the de Rham point of view that an $\infty$-local system on a manifold $M$ is a $\ZZ$-graded vector bundle equipped with a flat $\ZZ$-graded superconnection. We denote the corresponding DG category by $\Loc_{\infty}(M)$ and study its behaviour with respect to homotopies. It has been proved by Holstein~\cite{Holstein2015}, using a different but equivalent version of $\infty$-local systems, that the pullback by a homotopy equivalence induces a quasi-equivalence on local systems. We use the de Rham point of view on $\infty$-local systems to provide a more explicit version of this homotopy invariance property. The precise result is as follows.

\begin{reptheorem}{thm:4.5}
Let $M, N$ be smooth manifolds and let $h$ be a smooth homotopy between maps $f,g \colon M \to N$. Then there exists an $\A_{\infty}$-natural isomorphism $\mathrm{hol}\colon f^* \Rightarrow g^*$ between the pullback functors $f^*,g^* \colon \Loc_{\infty}(N) \to \Loc_{\infty}(M).$ Such an $\A_{\infty}$-natural isomorphism depends only on $h$ and is given explicitly by Chen's iterated integrals.
\end{reptheorem}

This result should be contrasted with the $\A_{\infty}$ version of de Rham's theorem due to Gugenheim~\cite{Gugenheim1977}, which plays a key role in the construction of the higher Riemann-Hilbert correspondence given in \cite{Block-Smith2014} and \cite{Abad-Schatz2016}. There are also some important corollaries that follow directly from it, including the following.

\begin{repcorollary}{cor:5.1}
If $f \colon M \to N$ is a smooth homotopy equivalence, then the pullback functor $f^{*} \colon \Loc_{\infty}(N) \to \Loc_{\infty}(M)$ is a quasi-equivalence.
\end{repcorollary}

\begin{repcorollary}{cor:5.2}
If $M$ is contractible, then $\Loc_{\infty}(M)$ is quasi-equivalent to $\DGVect_{\RR}$.
\end{repcorollary}

The latter should be thought of as a categorified version of the Poincar\'e lemma. It provides a local normal form for flat superconnections which we believe is of independent interest.

\begin{repcorollary} {cor:5.4}
For an arbitrary manifold $M$, any $\infty$-local system $(E,D)$ is locally isomorphic to a constant $\infty$-local system, that is, every point has an open neighbourhood $U$ in which $(E\vert_U,D\vert_U)$ is isomorphic to a constant $\infty$-local system.
\end{repcorollary}

We remark that an analogous result in the complex-analytic context have been established by Bondal-Rosly~\cite{Bondal-Rosly2011}. Some other related results are also considered in \cite{Demessie-Seamann2015}.

The paper is organized as follows.  Some preliminaries on DG categories, DG functors, $\A_{\infty}$-natural transformations and $\infty$-local systems are presented in \S\ref{sec:2}. In \S\ref{sec:3}, we describe some properties of Chen's iterated integrals that are used in our construction. The main section of the paper is \S\ref{sec:4}, which contains the proof of the homotopy invariance of the DG category of $\infty$-local systems. In \S\ref{sec:5}, we derive some corollaries of our main result, including the categorified version of the Poincar\'e lemma. Finally, Appendix \ref{app:A} contains some technical computations which are essential for the proof of the main result and Appendix \ref{app:B} a short compilation of the basic definitions of $\A_{\infty}$-categories.

\begin{note}
While we were writing this paper, the work \cite{CHL2018} appeared. Among other results, this paper discusses a higher version of the Riemann-Hilbert correspondence which generalizes and extends the results of \cite{Block-Smith2014}. In this reference the notion of $\infty$-local system is replaced by that of cohomologically locally constant DG sheaf with cohomology sheaves of finite rank, and their results are therefore closely related to ours.
\end{note}

\begin{NotationConventions}
If $K$ is a field and $V = \bigoplus_{k \in \ZZ} V^k$ is a $\ZZ$-graded $K$-vector space, we denote by $sV$ its suspension, that is, the $\ZZ$-graded $K$-vector space with grading defined by
$$
(sV)^{k} = V^{k+1}.
$$
Vector spaces and tensor products are defined over the real numbers unless otherwise stated.

If $E = \bigoplus_{k \in \ZZ} E^k$ is a $\ZZ$-graded vector bundle over a smooth manifold $
M$, we define
$$
\Omega^{\sbullet}(M,E)=\Gamma(\Lambda^{\sbullet} T^*M \otimes E).
$$
This space is graded by the total degree
$$
\Omega^{\sbullet}(M,E)^n = \bigoplus_{p+q=n} \Omega^{p}(M,E^q).
$$
Given an element $\omega \in \Omega^p(M,E^q)$ we will say that $\omega$ is of partial degree $p$.
\end{NotationConventions}

\begin{acknowledgements}
We would like to acknowledge the support of Colciencias through  their grant {\it Estructuras lineales en topolog\'ia y geometr\'ia}, with contract number FP44842-013-2018. Camilo and Sebasti\'an would also like to thank the Fields Institute in Toronto for their hospitality and support during the final stages of this project. Finally, we thank the referee for the helpful and constructive comments.
\end{acknowledgements}

\section{Preliminaries}\label{sec:2}
In this section we review some facts regarding DG categories and higher local systems that will be used throughout the paper. For a more thorough discussion on the topics treated here see for example
\cite{Keller2006,Positselski2011,Block2010,Block-Smith2014,AriasAbad-Crainic2012,Abad-Schatz2013}.

\subsection{DG categories, DG functors and $\A_{\infty}$-natural transformations}\label{subsec:2.1}
A \emph{DG category} (where DG stands for ``differential graded'') over a field $K$ is a $K$-linear category $\Ccal$ such that for every two objects $X$ and $Y$ the space of arrows $\Hom_{\Ccal}(X,Y)$ is equipped with a structure of a cochain complex of $K$-vector spaces, and for every three objects $X$, $Y$ and $Z$ the composition map \[\Hom_{\Ccal}(Y,Z) \otimes_K \Hom_{\Ccal}(X,Y) \to \Hom_{\Ccal}(X,Z)\] is a morphism of cochain complexes. Thus, by definition, \[\Hom_{\Ccal}(X,Y) = \bigoplus_{n \in \ZZ} \Hom_{\Ccal}^{n}(X,Y)\] is a $\ZZ$-graded $K$-vector space with a differential $d \colon \Hom_{\Ccal}^{n}(X,Y) \to \Hom_{\Ccal}^{n+1}(X,Y)$. The elements $f \in \Hom_{\Ccal}^{n}(X,Y)$ are called \emph{homogeneous of degree $n$}, and we write $\vert f \vert = n$. We shall denote the set of objects of $\Ccal$ by $\Ob \Ccal$.

The prototypical example of a DG category is the category of cochain complexes of $K$-vector spaces, which we denote by $\DGVect_K$. Its objects are cochain complexes of $K$-vector spaces and the morphism spaces $\Hom_{\DGVect_K}(X,Y)$ are endowed with the differential defined as
$$
d (f) = d_{Y} \circ f - (-1)^n f \circ d_{X},
$$
for any homogeneous element $f$ of degree $n$.

Let $\Ccal$ be a DG category and let $X \in \Ob \Ccal$. Given a closed morphism $f \in \Hom_{\Ccal}^0(Y,Z)$ we define $f_* \colon \Hom_{\Ccal}(X,Y) \to \Hom_{\Ccal}(X,Z)$ by $f_{*} (g) = f \circ g$ for $g \in \Hom_{\Ccal}(X,Y)$. It is not difficult to see that $f_*$ is a morphism of cochain complexes.  Similarly, if we define $f^* \colon  \Hom_{\Ccal}(Z,X) \to \Hom_{\Ccal}(Y,X)$ by $f^{*} (h) = h \circ f$ for $h \in \Hom_{\Ccal}(Z,X)$, then $f^*$ is a morphism of cochain complexes.

Given a DG category $\Ccal$ one can define an ordinary category $\mathbf{Ho}(\Ccal)$ by keeping the same set of objects and replacing each $\Hom$ complex by its $0$th cohomology. We call $\mathbf{Ho}(\Ccal)$ the \emph{homotopy category} of $\Ccal$.

If $\Ccal$ and $\Dcal$ are DG categories, a \emph{DG functor} $F \colon \Ccal \to \Dcal$ is an $K$-linear functor whose associated map for $X, Y \in \Ob \Ccal$,
$$
F_{X,Y} \colon \Hom_{\Ccal}(X,Y) \to \Hom_{\Dcal}(F(X),F(Y)),
$$
is a morphism of cochain complexes. Notice that any DG functor $F \colon \Ccal \to \Dcal$ induces an ordinary functor
$$
\mathbf{Ho}(F) \colon \mathbf{Ho}(\Ccal) \to \mathbf{Ho}(\Dcal)
$$
between the corresponding homotopy categories. A DG functor $F \colon \Ccal \to \Dcal$ is said to be \emph{quasi fully faithful} if for every pair of objects $X, Y \in \Ob \Ccal$ the morphism $F_{X,Y}$ is a quasi-isomorphism. Moreover, the DG functor $F $ is said to be \emph{quasi essentially surjective} if $\mathbf{Ho}(F)$ is essentially surjective. A DG functor which is both quasi fully faithful and quasi essentially surjective is called a \emph{quasi-equivalence}.

Let $F$ and $G$ be two functors between two DG categories $\Ccal$ and $\Dcal$. We want to define the notion of an $\A_{\infty}$-natural transformation $\lambda$ from $F$ to $G$. A thorough discussion of this in connection with the theory of $\A_{\infty}$-categories is presented in Appendix~\ref{app:B}. Here, we want to be explicit, and motivate the definition as follows. If $X, Y \in \Ob \Ccal$ and $f \in \Hom_{\Ccal}(X,Y)$, we can consider the standard diagram
$$
\xymatrix{ F(X) \ar[r]^-{\lambda(X)} \ar[d]_-{F(f)} & G(X) \ar[d]^-{G(f)} \\ F(Y) \ar[r]_-{\lambda(Y)} & G(Y)}
$$
Normally one would require this diagram to commute. However, in our case we will only require this diagram to be commutative up to homotopy. To make precise what this means we need one piece of notation.

Let $\Ccal$ be a DG category and let $X_0,\dots,X_n$ be a collection of objects of $\Ccal$. We endow
$$
s\! \Hom_{\Ccal}(X_{n-1},X_n) \otimes_K \cdots \otimes_K s\! \Hom_{\Ccal}(X_{0},X_1)
$$
with a differential $b$ defined as
\begin{align*}
b (f_{n-1} \otimes \cdots \otimes f_0) = &\sum_{i=0}^{n-1} (-1)^{\sum_{j=i+1}^{n-1}\vert f_{j} \vert  + n - i - 1}  f_{n-1} \otimes \cdots \otimes d f_i \otimes \cdots\otimes f_0 \\
& + \sum_{i=0}^{n-2} (-1)^{\sum_{j=i+2}^{n-1}\vert f_{j} \vert + n - i } f_{n-1} \otimes \cdots \otimes (f_{i+1} \circ f_i) \otimes \cdots\otimes f_0
\end{align*}
for homogeneous elements $f_0 \in  s\! \Hom_{\Ccal}(X_{0},X_1), \dots, f_{n-1} \in s\! \Hom_{\Ccal}(X_{n-1},X_n)$.  Here $d$ denotes indistinctly the differential in any of the spaces $\Hom_{\Ccal} (X_{i},X_{i+1})$. A direct calculation shows that indeed, $b^2 = 0$. Notice that the definition of $b$ resembles that of the differential of the Hochschild chain complex of a DG algebra.

Armed with this notation, the formal definition of an $\A_{\infty}$-natural transformation is given as follows. Let $\Ccal$ and $\Dcal$ be DG categories and let $F \colon \Ccal \to \Dcal$ and $G \colon \Ccal \to \Dcal$ be DG functors. An \emph{$\A_{\infty}$-natural transformation}  $\lambda \colon F \Rightarrow G$ is the datum of a closed morphism $\lambda_0(X) \in \Hom_{\Dcal}^0 (F(X),G(X))$ for each $X \in \Ob \Ccal$ and a collection of $K$-linear maps of degree $0$
$$
\lambda_n \colon s\! \Hom_{\Ccal}(X_{n-1},X_n) \otimes_K \cdots \otimes_K s\! \Hom_{\Ccal}(X_{0},X_1) \to \Hom_{\Dcal}(F(X_0),G(X_n))
$$
for every collection $X_0,\dots,X_n \in \Ob \Ccal$, such that for all composable chains of homogeneous morphisms $f_0 \in  s\! \Hom_{\Ccal}(X_{0},X_1), \dots, f_{n-1} \in s\! \Hom_{\Ccal}(X_{n-1},X_n)$ the relation
\begin{align*}
G(f_{n-1}) \circ \lambda_{n-1}(f_{n-2}\otimes \cdots \otimes f_0) &- (-1)^{\sum_{i=1}^{n-1}\vert f_i \vert -n+1} \lambda_{n-1}(f_{n-1} \otimes\cdots \otimes f_1) \circ F(f_0) \\
&\qquad\qquad  = \lambda \left( b(f_{n-1} \otimes \cdots \otimes f_0) \right) + d \left(\lambda_n (f_{n-1} \otimes \cdots \otimes f_0) \right)
\end{align*}
is satisfied for any $n \geq 1$. The $\lambda$ on the right denotes the direct sum of the various $\lambda_n$. For $n=1$ this yields the condition
$$
G(f_0) \circ \lambda_0(X_0) - \lambda_0(X_1) \circ F(f_0) = \lambda_1 \left(d(f_0) \right) + d \left(\lambda_1(f_0)\right).
$$
Since the map $\lambda_1 \colon s\!\Hom_{\Ccal}(X_{0},X_1) \to \Hom_{\Dcal}(F(X_{0}),G(X_1))$ has degree $-1$ when considered as a map defined over $\Hom_{\Ccal}(X_{0},X_1)$, this implies that the diagram
$$
\xymatrix{ F(X_0) \ar[r]^-{\lambda_0(X_0)} \ar[d]_-{F(f_0)} & G(X_0) \ar[d]^-{G(f_0)} \\ F(X_1) \ar[r]_-{\lambda_0(X_1)} & G(X_1)}
$$
commutes up to a homotopy given by $\lambda_1$. More generally, for $n \geq 2$, one may say that $\lambda_{n-1}$ ``commutes'' with $G(f_{n-1})$ and $F(f_0)$ up to a homotopy given by $\lambda_{n}$.

As usual, $\A_{\infty}$-natural transformations can be composed: if $F \colon \Ccal \to \Dcal$, $G \colon \Ccal \to \Dcal$ and $H \colon \Ccal \to \Dcal$ are three DG functors from the DG category $\Ccal$ to the DG category $\Dcal$, and $\lambda \colon F \Rightarrow G$ and $\mu \colon G \Rightarrow H$ are two $\A_{\infty}$-natural transformations, then the formula
$$
(\mu \circ \lambda)_n = \sum_{i=0}^n \mu_i \circ \lambda_{n-i}
$$
defines a new $\A_{\infty}$-natural transformation $\mu \circ \lambda \colon F \Rightarrow H$.  An \emph{$\A_{\infty}$-natural isomorphism} between functors from $\Ccal$ to $\Dcal$ is an $\A_\infty$-natural transformation $\lambda$
such that $\lambda_0(X)$ is an isomorphism for all $X \in \Ob \Ccal$.

We close this section with the following observation.

\begin{lemma}\label{lem:2.1}
Let $\Ccal$ and $\Dcal$ be DG categories and let $F \colon \Ccal \to \Dcal$ be a DG functor. Suppose that there is a DG functor $G \colon \Dcal \to \Ccal$ together with $\A_{\infty}$-natural isomorphisms $\lambda \colon G \circ F \Rightarrow \id_{\Ccal}$ and $\mu \colon F \circ G \Rightarrow \id_{\Dcal}$. Then $F$ is a quasi-equivalence.
\end{lemma}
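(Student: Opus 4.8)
The plan is to verify directly the two defining properties of a quasi-equivalence: that $F$ is quasi essentially surjective and that it is quasi fully faithful. The first is immediate from the existence of $\mu$. Indeed, for any object $Y \in \Ob\Dcal$ the component $\mu_0(Y) \in \Hom^0_\Dcal(F(G(Y)),Y)$ is a closed degree-$0$ isomorphism, so its class is an isomorphism $\mathbf{Ho}(F)(G(Y)) = F(G(Y)) \xrightarrow{\sim} Y$ in $\mathbf{Ho}(\Dcal)$. Hence every object of $\mathbf{Ho}(\Dcal)$ lies in the essential image of $\mathbf{Ho}(F)$, and $F$ is quasi essentially surjective.

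The substance of the argument is quasi full faithfulness, i.e.\ showing that $F_{X,Y}\colon \Hom_\Ccal(X,Y) \to \Hom_\Dcal(F(X),F(Y))$ is a quasi-isomorphism for all $X,Y \in \Ob\Ccal$. My strategy is first to show that the composite $G_{F(X),F(Y)} \circ F_{X,Y}$, which equals $(G\circ F)_{X,Y}$, is a quasi-isomorphism, extracting this from the lowest structure relation of $\lambda$. Since $\lambda_0(X)$ is a closed degree-$0$ isomorphism, so is its inverse (differentiating $\lambda_0(X)\circ\lambda_0(X)^{-1} = \id$ shows that $\lambda_0(X)^{-1}$ is closed), and conjugation $c(\psi) = \lambda_0(Y)\circ \psi \circ \lambda_0(X)^{-1}$ defines an isomorphism of complexes $\Hom_\Ccal(G(F(X)),G(F(Y))) \to \Hom_\Ccal(X,Y)$. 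The $n=1$ relation for $\lambda \colon G\circ F \Rightarrow \id_\Ccal$ reads
\[
f \circ \lambda_0(X) - \lambda_0(Y)\circ (G\circ F)(f) = \lambda_1(d f) + d(\lambda_1(f))
\]
for $f \in \Hom_\Ccal(X,Y)$. Setting $h(f) := \lambda_1(f)\circ \lambda_0(X)^{-1}$, a degree $-1$ operator on $\Hom_\Ccal(X,Y)$, and composing the relation on the right with $\lambda_0(X)^{-1}$, I expect to obtain (up to the usual signs)
\[
c \circ (G\circ F)_{X,Y} = \id - (d\circ h + h \circ d),
\]
so that $c\circ (G\circ F)_{X,Y}$ is chain homotopic to the identity and hence a quasi-isomorphism. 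As $c$ is an isomorphism of complexes, it follows that $(G\circ F)_{X,Y} = G_{F(X),F(Y)}\circ F_{X,Y}$ is a quasi-isomorphism.

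Running the identical argument with $\mu\colon F\circ G \Rightarrow \id_\Dcal$ in place of $\lambda$ shows that $(F\circ G)_{W,W'} = F_{G(W),G(W')}\circ G_{W,W'}$ is a quasi-isomorphism for all $W,W' \in \Ob\Dcal$; taking $W = F(X)$ and $W' = F(Y)$ gives that $F_{G(F(X)),G(F(Y))}\circ G_{F(X),F(Y)}$ is a quasi-isomorphism. To finish I would pass to cohomology and carry out an elementary diagram chase for the chain $F_{X,Y}$, $G_{F(X),F(Y)}$, $F_{G(F(X)),G(F(Y))}$: the two outer composites being isomorphisms on cohomology forces the induced map of $G_{F(X),F(Y)}$ to be simultaneously injective and surjective, hence an isomorphism, whereupon the induced map of $F_{X,Y}$ is an isomorphism as well. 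This proves that $F_{X,Y}$ is a quasi-isomorphism, and together with the essential surjectivity established above, that $F$ is a quasi-equivalence.

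The main obstacle I anticipate is purely bookkeeping: pinning down the precise signs so that the $n=1$ relation genuinely produces the homotopy $d\circ h + h\circ d$ rather than an uncontrolled error term, and checking that the degree $-1$ operator $h$ built from $\lambda_1$ and $\lambda_0(X)^{-1}$ is compatible with the suspension conventions in the definition of an $\A_{\infty}$-natural transformation. Everything else is formal.
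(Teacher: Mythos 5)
Your proposal is correct and follows essentially the same route as the paper's proof: quasi essential surjectivity via $\mu_0(Y)$, the $n=1$ relation for $\lambda$ (resp.\ $\mu$) reorganized as a chain homotopy showing that $(G\circ F)_{X,Y}$ (resp.\ $(F\circ G)_{W,W'}$) is a quasi-isomorphism on $\Hom$ complexes, and a concluding two-out-of-three argument. The only differences are cosmetic: you conjugate by $\lambda_0(X)^{-1}$ to obtain a homotopy to the identity (which indeed works with no sign trouble, since $\lambda_0(X)^{-1}$ is closed of degree $0$), whereas the paper compares the homotopic chain maps $\lambda_0(X)^{*}$ and $\lambda_0(Y)_{*}\circ(G\circ F)$ directly, and you make explicit the final diagram chase that the paper dispatches with ``the desired implication follows at once.''
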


\begin{proof}
Let us first prove that $F$ is quasi essentially surjective. Given any object $Y \in \Ob \Dcal$ the morphism $\mu_0(Y) \in \Hom_{\Dcal}^0(F(G(Y)),Y)$ is an isomorphism. In particular it descends to an isomorphism in the homotopy category. Let us show that $F$ is quasi fully faithful. By definition, if  $f \in s\!\Hom_{\Ccal}(X,Y)$ is a homogenous element, the $\A_{\infty}$-natural isomorphism $\lambda \colon G \circ F \Rightarrow \id_{\Ccal}$ produces the relation
$$
f \circ \lambda_0(X) - \lambda_0(Y) \circ G(F(f)) = \lambda_1 \left( d(f)\right) + d \left( \lambda_1(f)\right).
$$
This, in turn, may be written as
$$
\lambda_0(X)^* \circ \id_{\Ccal} - \lambda_0(Y)_* \circ (G\circ F) = \lambda_1 \circ d + d \circ \lambda_1,
$$
for any pair of objects $X,Y \in \Ob \Ccal$, where $\lambda_0(X)^*$ is the pullback of $\lambda_0(X)$ by $X$ and $\lambda_0(Y)_*$ is the pushforward of $\lambda_0(Y)$ by $G(F(Y))$. Hence, the morphisms of cochain complexes $\lambda_0(X)^* \circ \id_{\Ccal}$ and $\lambda_0(Y)_* \circ (G\circ F)$ are homotopic and, therefore, they induce the same morphism in cohomology. But clearly $\lambda_0(X)^*$, $\lambda_0(Y)_*$ and $\id_{\Ccal}$ induce isomorphisms in cohomology, and thus so does $G \circ F$. It follows that $G \circ F$ is a quasi-isomorphism. By an entirely analogous argument, using the $\A_{\infty}$-natural isomorphism $\mu \colon F \circ G \Rightarrow \id_{\Dcal}$, one may prove that $F \circ G$ is a quasi-isomorphism. The desired implication follows at once.
\end{proof}

\subsection{$\infty$-Local systems}\label{subsec:2.2}
Let $E = \bigoplus_{k \in \ZZ} E^k$ be a $\ZZ$-graded vector bundle over a manifold $M$. We consider the space of $E$-valued differential forms $\Omega^{\sbullet}(M,E)$ to be $\ZZ$-graded with respect to the total degree. A \emph{$\ZZ$-graded superconnection} on $E$ is an operator $D \colon \Omega^{\sbullet} (M,E) \to \Omega^{\sbullet}(M,E)$ of degree $1$ which satisfies the Leibniz rule
$$
D (\sigma \wedge \omega)= \ud \sigma \wedge \omega + (-1)^k \sigma \wedge D \omega,
$$
for all $\sigma \in \Omega^k(M)$ and $\omega \in  \Omega^{\sbullet} (M,E)$. The \emph{curvature} of $D$ is the operator $D^2$. This is an $\Omega^{\sbullet}(M)$-linear operator on $\Omega^{\sbullet} (M,E)$ of degree $2$ which is given by multiplication by an element of $\Omega^{\sbullet} (M,\End(E))$. If $D^2=0$, then we say that $D$ is a \emph{flat} $\ZZ$-graded superconnection. By an \emph{$\infty$-local system} on $M$ we mean a $\ZZ$-graded vector bundle $E$ equipped with a flat $\ZZ$-graded superconnection $D$. We will denote such an $\infty$-local system by $(E,D)$.

As a simple example, consider a trivial vector bundle $M \times V$ with fiber $V = \bigoplus_{k \in \ZZ} V^k$ a $\ZZ$-graded vector space. It is a easy matter to verify that the de Rham differential $\uud \colon \Omega^{\sbullet}(M) \to \Omega^{\sbullet}(M)$ can be extended to an operator $\uud \colon \Omega^{\sbullet}(M, V) \to \Omega^{\sbullet}(M, V)$ of degree $1$. With the identification $\Omega^{\sbullet}(M, V) = \Omega^{\sbullet}(M, M \times V)$, one can readily show that $(M \times V, \uud)$ defines an $\infty$-local system on $M$. We will refer to it as a \emph{constant $\infty$-local system} on $M$.

For convenience of reference, we make the following observation. Suppose that $(E,D)$ is an $\infty$-local system on $M$. The Leibniz rule implies that $D$ is completely determined by its restriction to $\Omega^0(M,E)$. Then we may decompose
$$
D = \sum_{k \geq 0} D_k,
$$
where $D_k$ is of partial degree $k$ with respect to the $\ZZ$-grading on $\Omega^{\sbullet}(M)$. It is clear that each $D_k$ for $k \neq 1$ is $\Omega^{\sbullet}(M)$-linear and therefore it is given by multiplication by an element $-\alpha_k \in \Omega^k(M,\End(E)^{1-k})$ (the minus sign is only a matter of convention). On the contrary, $D_1$ satisfies the Leibniz rule on each of the vector bundles $E^k$, so it must be of the form $\ud_{\nabla}$, where $\nabla$ is an ordinary connection on $E$ which preserves the $\ZZ$-grading. We can thus write
$$
D = \ud_{\nabla} - \alpha_0 - \alpha_2 - \alpha_3 - \cdots.
$$
From this formula, it is straightforward to check that the flatness condition becomes equivalent to
\begin{align*}
\alpha_0^2 &= 0, \\
\ud_{\nabla} \alpha_0 &=0, \\
[\alpha_0,\alpha_2] + F_{\nabla} &= 0, \\
[\alpha_0,\alpha_{n+1}] + \ud_{\nabla}\alpha_n + \sum_{k=2}^{n-1} \alpha_k \wedge \alpha_{n+1-k} &= 0, \,\, n \geq 2,
\end{align*}
where $F_{\nabla}$ is the curvature of the connection $\nabla$. The first identity implies that we have a cochain complex of vector bundles with differential $\alpha_0$. The second equation express the fact that $\alpha_0$ is covariantly constant with respect to the connection $\nabla$. The third equation indicates that the connection $\nabla$ fails to be flat up to terms involving the homotopy $\alpha_2$ and the differential $\alpha_0$.

Now let us assume that $E$ is trivialized over $M$. This means that $E = M \times V$ for some $\ZZ$-graded vector space $V = \bigoplus_{k \in \ZZ} V^{k}$. In this case, we have $\alpha_k \in \Omega^k(M, \End(V)^{1-k})$ for $k \neq 1$. Moreover, we can write $\ud_{\nabla} = \uud - \alpha_1$ for some $\alpha_1 \in \Omega^1(M, \End(V)^{0})$. Thus, the $\ZZ$-graded superconnection $D$ may be expressed as $D = \uud - \alpha$, where $\alpha \in \Omega^{\sbullet}(M,\End(V))$ is the homogenous element of total degree $1$ defined by $\alpha = \sum_{k \geq 0} \alpha_k$. In addition, a straightforward calculation gives
$$
D^2  = \ud \alpha - \alpha \wedge \alpha.
$$
Consequently, the totality of equations of the flatness condition is equivalent to the single statement that $\alpha$ satisfies
$$
\ud \alpha - \alpha \wedge \alpha = 0.
$$
This is known as the \emph{Maurer-Cartan equation}.

Suppose we have another trivialization of $E$ over $M$ such that $E = M \times W$ for some $\ZZ$-graded vector space $W = \bigoplus_{k \in \ZZ} W^k$ and $D = \uud - \beta$ for some homogenous element $\beta \in \Omega^{\sbullet}(M,\End(W))$ of total degree $1$ satisfying the Maurer-Cartan equation. Then, we have a transition isomorphism between the two trivializations, which is realized by a linear isomorphism $g \colon \Omega^0(M,V) \to \Omega^0(M,W)$ that commutes with the operators $\uud - \alpha$ and $\uud - \beta$. If we think of $g$ as an element of $\Omega^0(M,\Hom(V,W))$, the latter condition is equivalent to the requirement that
$$
\alpha = g^{-1} \beta g - g^{-1} \ud g.
$$
The change from $\beta$ to $\alpha$ given in this equation goes by the name of a ``gauge transformation''.

For a pair of $\infty$-local systems $(E,D)$ and $(E',D')$ there is a natural notion of a morphism from $(E,D)$ to $(E',D')$. Namely, such a morphism is a degree $0$ linear map $\Phi \colon \Omega^{\sbullet}(M,E) \to \Omega^{\sbullet}(M,E')$ which is $\Omega^{\sbullet}(M)$-linear and commutes with the $\ZZ$-graded superconnections $D$ and $D'$. If both $(E,D)$ and $(E',D')$ are trivialized over $M$ in such a way that $E = M \times V$ and $E'=M \times V'$ for some $\ZZ$-graded vector spaces $V$ and $V'$, and $D = \uud - \alpha$ and $D'=\uud - \alpha'$ for some homogeneous elements $\alpha \in \Omega^{\sbullet}(M,\End(V))$ and $\alpha' \in \Omega^{\sbullet}(M,\End(V'))$ of total degree $1$ satisfying the Maurer-Cartan equation, then this condition is
$$
(\uud - \alpha') \circ \Phi = \Phi \circ (\uud - \alpha),
$$
or, interpreting $\Phi$ as an element of $\Omega^{\sbullet}(M,\Hom(V,V'))$,
$$
\ud \Phi = \alpha' \wedge \Phi - \Phi \wedge \alpha.
$$
A morphism from $(E,D)$ to $(E',D')$ is called an \emph{isomorphism} if the underlying map $\Phi \colon \Omega^{\sbullet}(M,E) \to  \Omega^{\sbullet}(M,E')$ is an isomorphism. Two $\infty$-local systems $(E,D)$ and $(E',D')$ are said to be \emph{isomorphic} if there is an isomorphism from $(E,D)$ to $(E',D')$.

As mentioned in the introduction, all $\infty$-local systems on a manifold $M$ can be naturally organized into a DG category, which we denote by $\Loc_{\infty}(M)$. Its objects are, of course, $\infty$-local systems $(E,D)$ on $M$. Given two $\infty$-local systems $(E,D)$ and $(E',D')$ we define the space of morphisms to be the $\ZZ$-graded vector space $\Omega^{\sbullet}(M,\Hom(E,E'))$ with the differential $\partial_{D,D'}$ acting as
$$
\partial_{D,D'} \omega = D' \wedge \omega - (-1)^{k} \omega \wedge D,
$$
for any homogenous element $\omega$ of degree $k$. If $(E,D)$ and $(E',D')$ are trivialized over $M$ as in the previous paragraph, then $\partial_{D,D'}$ may be expressed by
$$
\partial_{D,D'} \omega = \ud \omega - \alpha' \wedge \omega + (-1)^{k} \omega \wedge \alpha.
$$
Notice that what we call a morphism from $(E,D)$ to $(E',D')$ is simply a closed element of $\Omega^{\sbullet}(M,\Hom(E,E'))$ of degree $0$.

Finally, to close this section, let us briefly recall the pullback operation of $\infty$-local systems. For a smooth map $f \colon M \to N$ between two manifolds $M$ and $N$, there is a DG functor $f^{*} \colon \Loc_{\infty}(N) \to \Loc_{\infty}(M)$ which sends $E$ with structure superconnection
$$
D = \ud_{\nabla} - \alpha_0 - \alpha_2 - \alpha_3 - \cdots,
$$
to $f^* E$ endowed with
$$
f^* D = \ud_{f^*\nabla} - f^* \alpha_0 - f^* \alpha_2 - f^* \alpha_3 - \cdots,
$$
where $f^*E$ is the pullback of $E$ and $f^* \nabla$ is the pullback connection on $f^{*} E$. One can easily check that $(f^*E,f^*D)$ is indeed an $\infty$-local system on $M$, so the DG functor $f^*$ is well defined. We refer to it as the \emph{pullback functor} induced by $f$.

\section{Some generalities on Chen's iterated integrals}\label{sec:3}

In this section, we state some properties of the type of iterated integrals that appear in our study. We make use of the notation and conventions of \S\ref{subsec:2.1} and \S\ref{subsec:2.2}.

Let $M$ be a smooth manifold and let $V = \bigoplus_{k \in \ZZ} V^k$ be a $\ZZ$-graded vector space. We denote by $\iota_s \colon M \to M \times [1,0]$ the inclusion at height $s$ given by $\iota_s(x) = (x,s)$. For an element $\omega$ of $\Omega^{\sbullet} (M \times [0,1], \End (V))$ we define for all $t \in [0,1]$ the following elements of $\Omega^{\sbullet}(M,\End(V))$,
\begin{align*}
 \Phi_0^{\omega}(t) &= \id_{V}, \\
 \Phi_1^{\omega}(t) &= \int_{0}^{t} \iota_{s_1}^{*} \ui_{\frac{\partial}{\partial s_1}} \omega \, \ud s_1, \\
 \Phi_n^{\omega}(t) &= \int_{0}^{t}  \int_{0}^{s_1} \cdots \int_{0}^{s_{n-1}} \iota_{s_1}^{*} \ui_{\frac{\partial}{\partial s_1}} \omega\wedge \iota_{s_2}^{*} \ui_{\frac{\partial}{\partial s_2}} \omega \wedge \cdots \wedge \iota_{s_n}^{*} \ui_{\frac{\partial}{\partial s_n}} \omega \, \ud s_n \cdots \ud s_2 \ud s_1, \,\, n\geq 2,
\end{align*}
where in the definition of $ \Phi_n^{\omega}(t)$, $t \geq s_1 \geq \cdots \geq s_n \geq 0$. The following lemma is easy to verify.

\begin{lemma}\label{lem:3.1}
The series
\begin{equation*}
\Phi^{\omega}(t) = \sum_{n=0}^{\infty}  \Phi_n^{\omega}(t)
\end{equation*}
converges for all $t \in [0,1]$ and defines a smooth map from $[0,1]$ to $\Omega^{\sbullet} (M,\End(V))$.
\end{lemma}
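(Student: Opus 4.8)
The plan is to exhibit absolute and uniform convergence of the series, together with all of its derivatives, on compact subsets of $M$, and then to invoke the standard theorem on term-by-term differentiation of a series of smooth maps. Throughout I work with the natural Fr\'echet topology on $\Omega^{\sbullet}(M,\End(V))$: after fixing a coordinate chart and a submultiplicative norm $\|\cdot\|$ on $\End(V)$ (available, for instance, when $E$ has finite rank), the seminorms $\|\cdot\|_{K,m}$ are the suprema over a compact set $K$ of the coefficient functions of a form together with their partial derivatives up to order $m$. Convergence of the series will mean convergence in each such seminorm, uniformly in $t$, and smoothness of $t\mapsto\Phi^{\omega}(t)$ will be read off from the uniform convergence of the term-by-term derivatives.

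The heart of the matter is a \emph{simplex-volume bound}. Writing $\eta_s := \iota_s^* \ui_{\frac{\partial}{\partial s}}\omega \in \Omega^{\sbullet}(M,\End(V))$, the map $(x,s)\mapsto \eta_s(x)$ is smooth, so on any compact $K$ and for any order $m$ there is a constant $C = C(K,m)$ with $\|\eta_s\|_{K,m}\le C$ for all $s\in[0,1]$. Since the wedge of $\End(V)$-valued forms combines the wedge of scalar forms with the product in $\End(V)$, the seminorm of an $n$-fold wedge is bounded by the product of the seminorms of the factors times a constant depending only on $\dim M$ raised to the power $n$; absorbing that constant into $C$, the integrand of $\Phi_n^{\omega}(t)$ has seminorm at most $C^{\,n}$ before differentiation. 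Higher $M$-derivatives are treated by the Leibniz rule, which distributes $m$ derivatives among the $n$ factors and so produces at most $n^{m}$ terms, each still bounded by $C^{\,n}$. As the domain of integration $\{t\ge s_1\ge\cdots\ge s_n\ge 0\}$ is an ordered simplex of volume $t^{n}/n!$, this gives
$$
\bigl\| \Phi_n^{\omega}(t)\bigr\|_{K,m} \;\le\; n^{m}\,C^{\,n}\,\frac{t^{n}}{n!},
$$
whence $\sum_{n\ge 0}\|\Phi_n^{\omega}(t)\|_{K,m}\le \sum_{n\ge 0} n^{m}C^{\,n}/n! <\infty$, uniformly for $t\in[0,1]$.

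This already shows that $\Phi^{\omega}(t)$ is well defined in $\Omega^{\sbullet}(M,\End(V))$ for every $t$ and, being a uniform limit of continuous maps, that $t\mapsto\Phi^{\omega}(t)$ is continuous. For smoothness I differentiate term by term. Applying the fundamental theorem of calculus to the outermost integral yields the recursion $\tfrac{d}{dt}\Phi_n^{\omega}(t)=\eta_t\wedge\Phi_{n-1}^{\omega}(t)$, whose terms obey the same simplex bound and hence converge uniformly; differentiation in the $M$-directions is justified by differentiating under the (compact) integral sign, again controlled by the estimate above. The standard theorem on term-by-term differentiation of a series of smooth functions whose derivatives converge uniformly on compacta then shows that $\Phi^{\omega}$ is smooth, with every derivative obtained by summing the corresponding derivatives of the $\Phi_n^{\omega}$; iterating gives smoothness of all orders.

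The only genuinely delicate point, and the one I expect to require care, is the uniform control of \emph{arbitrarily high} derivatives simultaneously: the polynomial factor $n^{m}$ produced by the Leibniz rule and the constants produced by the wedge and by the grading of $V$ must be seen not to spoil summability. This is precisely where the factorial decay $1/n!$ coming from the volume of the ordered simplex is indispensable. The case $m=0$ (convergence in sup-norm at fixed $t$) is immediate, but the full smoothness statement forces one to carry this bookkeeping through all the seminorms at once.
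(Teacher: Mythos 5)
Your proof is correct, and since the paper states Lemma \ref{lem:3.1} without proof (dismissing it as ``easy to verify''), your argument supplies exactly the standard verification that is implicitly intended: the simplex-volume factor $t^n/n!$ dominates the growth $n^m C^n$ produced by the Leibniz rule and the submultiplicativity of the wedge in each Fr\'echet seminorm, and uniform convergence of the termwise $t$-derivatives via the recursion $\frac{\ud}{\ud t}\Phi^{\omega}_n(t)=\iota_t^*\ui_{\frac{\partial}{\partial t}}\omega\wedge\Phi^{\omega}_{n-1}(t)$ (the same computation underlying the paper's Proposition \ref{prop:3.2}) yields smoothness. Your parenthetical caveat about finite rank is apt --- the paper tacitly assumes the graded bundle has finite total rank, which is what makes a submultiplicative norm on $\End(V)$ available --- and I see no gap in the argument.
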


We call $\Phi^{\omega}(t)$ the \emph{iterated integral} of the element $\omega$. Compare this with the definitions of \cite{Igusa2009} and the more classical reference \cite{Chen1977}.

These iterated integrals define solutions to differential equations we are interested in. More precisely, we have the following.

\begin{proposition}\label{prop:3.2}
Consider an element $\omega$ of $\Omega^{\sbullet}(M \times [0,1], \End(V))$ and the smooth map from $[0,1]$ to $\Omega^{\sbullet}(M, \End(V))$ given by $t \mapsto \Phi^{\omega}(t)$. Then
\begin{align*}
\begin{split}
 \frac{\ud \Phi^{\omega}(t)}{\ud t} &= \iota_{t}^* \ui_{\frac{\partial}{\partial t}} \omega \wedge \Phi^{\omega}(t), \\
\Phi^{\omega}(0) &=\id_V,
\end{split}
\end{align*}
for all $t \in [0,1]$.
\end{proposition}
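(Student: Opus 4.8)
The plan is to differentiate the defining series $\Phi^{\omega}(t) = \sum_{n=0}^{\infty} \Phi_n^{\omega}(t)$ term by term and to recognize that the derivative of the $n$-th term obeys a simple recursion. To streamline the bookkeeping I would first abbreviate
\[
\theta(s) = \iota_s^{*} \ui_{\frac{\partial}{\partial s}} \omega \in \Omega^{\sbullet}(M,\End(V)),
\]
a family depending smoothly on $s$, so that for $n \geq 1$
\[
\Phi_n^{\omega}(t) = \int_0^t \int_0^{s_1} \cdots \int_0^{s_{n-1}} \theta(s_1) \wedge \cdots \wedge \theta(s_n) \, \ud s_n \cdots \ud s_1.
\]

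The heart of the argument is the observation that the outermost integral is $\int_0^t (\cdots) \, \ud s_1$, so the fundamental theorem of calculus evaluates the integrand at $s_1 = t$. As $\theta(s_1)$ is the leftmost factor of the wedge product, this extracts a factor $\theta(t)$ on the left and leaves precisely the iterated integral $\Phi_{n-1}^{\omega}(t)$, once one relabels $s_2, \dots, s_n$ and notes that the constraint $s_2 \leq s_1$ has become $s_2 \leq t$. In other words,
\[
\frac{\ud \Phi_n^{\omega}(t)}{\ud t} = \theta(t) \wedge \Phi_{n-1}^{\omega}(t), \qquad n \geq 1.
\]
Since $\Phi_0^{\omega}(t) = \id_V$ is constant, summing over $n \geq 1$ and reindexing yields
\[
\frac{\ud \Phi^{\omega}(t)}{\ud t} = \theta(t) \wedge \sum_{n=1}^{\infty} \Phi_{n-1}^{\omega}(t) = \iota_t^{*} \ui_{\frac{\partial}{\partial t}} \omega \wedge \Phi^{\omega}(t),
\]
which is the asserted equation. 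The initial condition is immediate: for $n \geq 1$ the integral $\Phi_n^{\omega}(0)$ is taken over a degenerate simplex and hence vanishes, so $\Phi^{\omega}(0) = \Phi_0^{\omega}(0) = \id_V$.

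The only point demanding care, and the main (if mild) obstacle, is justifying the term-by-term differentiation, that is, the interchange of $\tfrac{\ud}{\ud t}$ with the infinite sum. For this I would appeal to the same factorial estimates underlying Lemma~\ref{lem:3.1}: restricting $t$ to a compact interval and working in a local frame, one bounds $\theta(s)$ uniformly, while integration over the simplex $\{\, t \geq s_1 \geq \cdots \geq s_n \geq 0 \,\}$ contributes a volume factor $t^n/n!$. This produces locally uniform bounds of the shape $C^n t^n / n!$ for both the series $\sum_n \Phi_n^{\omega}(t)$ and its formal derivative series $\sum_n \theta(t) \wedge \Phi_{n-1}^{\omega}(t)$. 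Consequently the derivative series converges locally uniformly in $t$, and the standard theorem on differentiating uniformly convergent series legitimizes the interchange, completing the proof.
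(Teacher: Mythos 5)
Your proposal is correct and follows essentially the same route as the paper's proof: differentiate the defining series term by term via the fundamental theorem of calculus applied to the outermost integral, obtaining $\tfrac{\ud}{\ud t}\Phi_n^{\omega}(t) = \iota_t^{*}\ui_{\frac{\partial}{\partial t}}\omega \wedge \Phi_{n-1}^{\omega}(t)$, and then resum. Your explicit justification of the interchange of differentiation and summation via factorial estimates is a welcome bit of extra care that the paper leaves implicit in the smoothness assertion of Lemma~\ref{lem:3.1}.
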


\begin{proof}
It is clear that $\Phi^{\omega}(0) = \id_V$. Let us calculate the derivative with respect to $t$. According to Lemma~\ref{lem:3.1}, we have
\begin{align*}
\Phi^{\omega}(t) = &\id_V +  \int_{0}^{t} \iota_{s_1}^{*} \ui_{\frac{\partial}{\partial s_1}} \omega \, \ud s_1 +  \int_{0}^{t}  \int_{0}^{s_1}  \iota_{s_1}^{*} \ui_{\frac{\partial}{\partial s_1}} \omega\wedge \iota_{s_2}^{*} \ui_{\frac{\partial}{\partial s_2}} \omega \, \ud s_2 \ud s_1 \\
& +  \int_{0}^{t}  \int_{0}^{s_1} \int_{0}^{s_2}  \iota_{s_1}^{*} \ui_{\frac{\partial}{\partial s_1}} \omega\wedge \iota_{s_2}^{*} \ui_{\frac{\partial}{\partial s_2}} \omega \wedge  \iota_{s_3}^{*} \ui_{\frac{\partial}{\partial s_3}} \omega \, \ud s_3 \ud s_2 \ud s_1 + \cdots
\end{align*}
Therefore,
\begin{align*}
 \frac{\ud \Phi^{\omega}(t)}{\ud t} &= \iota_{t}^{*} \ui_{\frac{\partial}{\partial t}} \omega  + \iota_{t}^{*} \ui_{\frac{\partial}{\partial t}} \omega \wedge \int_{0}^{t} \iota_{s_2}^{*} \ui_{\frac{\partial}{\partial s_2}} \omega \, \ud s_2  \\
& \phantom{=} \, +  \iota_{t}^{*} \ui_{\frac{\partial}{\partial t}} \omega \wedge \int_{0}^{t}  \int_{0}^{s_2}  \iota_{s_2}^{*} \ui_{\frac{\partial}{\partial s_2}} \omega\wedge \iota_{s_3}^{*} \ui_{\frac{\partial}{\partial s_3}} \omega \, \ud s_3 \ud s_2 + \cdots \\
&=  \iota_{t}^{*} \ui_{\frac{\partial}{\partial t}} \omega \wedge \left( \id_V + \int_{0}^{t} \iota_{s_1}^{*} \ui_{\frac{\partial}{\partial s_1}} \omega \, \ud s_1 +  \int_{0}^{t}  \int_{0}^{s_1}  \iota_{s_1}^{*} \ui_{\frac{\partial}{\partial s_1}} \omega\wedge \iota_{s_2}^{*} \ui_{\frac{\partial}{\partial s_2}} \omega \, \ud s_2 \ud s_1 + \cdots\right) \\
&= \iota_{t}^{*} \ui_{\frac{\partial}{\partial t}} \omega \wedge \Phi^{\omega}(t),
\end{align*}
as asserted.
\end{proof}

Another important result that we need is the following gauge invariance property of iterated integrals.

\begin{proposition}\label{prop:3.3}
Let $W = \bigoplus_{k \in \ZZ} W^{k}$ be another $\ZZ$-graded vector space and let $\omega \in \Omega^{\sbullet}(M \times [0,1], \End(V))$ and $\eta \in \Omega^{\sbullet}(M \times [0,1], \End(W))$. Suppose that there is an invertible element $g \in \Omega^0 (M \times [0,1], \Hom(V,W))$ such that $\omega = g^{-1} \eta g - g^{-1} \ud g$. Then
\begin{equation*}
\Phi^{\omega}(t) = (\iota_t^* g)^{-1} \Phi^{\eta}(t) \iota_0^* g,
\end{equation*}
for all $t \in [0,1]$.
\end{proposition}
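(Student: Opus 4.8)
The plan is to exploit the first-order linear differential equation satisfied by iterated integrals, established in Proposition~\ref{prop:3.2}. Both $\Phi^{\omega}(t)$ and $\Phi^{\eta}(t)$ are the unique solutions of linear initial value problems of the same shape, so I would prove the identity by showing that the right-hand side, regarded as a function of $t$, solves exactly the problem that characterizes $\Phi^{\omega}(t)$. Uniqueness of solutions of such linear equations then forces the two sides to agree. Concretely, I set $g_t := \iota_t^* g \in \Omega^0(M,\Hom(V,W))$ and $\Psi(t) := g_t^{-1}\,\Phi^{\eta}(t)\,g_0$, and aim to show $\Psi = \Phi^{\omega}$.

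First I would record two elementary facts. Since $g$ is a $0$-form, the only part of $\ud g$ surviving contraction with $\tfrac{\partial}{\partial t}$ is its $\ud t$-component, so that $\frac{\ud g_t}{\ud t} = \iota_t^* \ui_{\frac{\partial}{\partial t}} \ud g$; and differentiating $g_t^{-1} g_t = \id_W$ gives $\frac{\ud}{\ud t}(g_t^{-1}) = - g_t^{-1}\big(\frac{\ud g_t}{\ud t}\big) g_t^{-1}$. The crucial algebraic input then comes from contracting the gauge relation $\omega = g^{-1}\eta g - g^{-1}\ud g$ with $\tfrac{\partial}{\partial t}$. As $\ui_{\frac{\partial}{\partial t}}$ is an antiderivation and $g,g^{-1}$ are $0$-forms, it annihilates them, yielding $\ui_{\frac{\partial}{\partial t}}\omega = g^{-1}\big(\ui_{\frac{\partial}{\partial t}}\eta\big)g - g^{-1}\big(\ui_{\frac{\partial}{\partial t}}\ud g\big)$. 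Pulling back at height $t$ and using the two facts above gives the key identity
$$
\iota_t^* \ui_{\frac{\partial}{\partial t}}\omega = g_t^{-1}\Big(\iota_t^* \ui_{\frac{\partial}{\partial t}}\eta\Big)\,g_t \;-\; g_t^{-1}\,\frac{\ud g_t}{\ud t}.
$$

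With this in hand I would differentiate $\Psi$ by the Leibniz rule, noting that no Koszul signs arise because $\frac{\ud}{\ud t}$ has form-degree $0$ and the factors $g_t, g_t^{-1}$ are $0$-forms. Substituting $\frac{\ud \Phi^{\eta}}{\ud t} = \iota_t^* \ui_{\frac{\partial}{\partial t}}\eta \wedge \Phi^{\eta}$ from Proposition~\ref{prop:3.2} and the formula for $\frac{\ud}{\ud t}(g_t^{-1})$, then using the cancellation $g_t g_t^{-1} = \id_W$, the result reorganizes precisely into $\big(\iota_t^* \ui_{\frac{\partial}{\partial t}}\omega\big) \wedge \Psi(t)$, by the identity displayed above. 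Since moreover $\Psi(0) = g_0^{-1}\,\id_W\,g_0 = \id_V$, the function $\Psi$ solves the same initial value problem as $\Phi^{\omega}$, and therefore $\Psi = \Phi^{\omega}$, which is the claim.

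The main obstacle is bookkeeping rather than conceptual: one must track the order of composition of the $\Hom$-valued forms (here $g_t\colon V \to W$, $\Phi^{\eta}\in\End W$, $g_0\colon V\to W$, so $\Psi(t)\in\End V$ as required) and check that the $0$-forms $g_t,g_t^{-1}$ pass through the wedge products and cancel correctly. A secondary point deserving a word is the uniqueness statement for the linear equation in the space $\Omega^{\sbullet}(M,\End(V))$; this follows from the standard Picard iteration and Gr\"onwall estimate applied to the integral form of the equation, in the same spirit as the convergence established in Lemma~\ref{lem:3.1}.
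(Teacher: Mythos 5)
Your proposal is correct and follows essentially the same route as the paper: you show that $(\iota_t^* g)^{-1}\Phi^{\eta}(t)\,\iota_0^* g$ satisfies the initial value problem of Proposition~\ref{prop:3.2} characterizing $\Phi^{\omega}(t)$, using the contracted gauge identity $\iota_t^* \ui_{\frac{\partial}{\partial t}}\omega = (\iota_t^* g)^{-1}\bigl(\iota_t^*\ui_{\frac{\partial}{\partial t}}\eta\bigr)\iota_t^* g - (\iota_t^* g)^{-1}\frac{\uud}{\ud t}(\iota_t^* g)$, and conclude by uniqueness, exactly as in the paper's proof. Your added remark justifying uniqueness via Picard iteration is a welcome detail the paper leaves implicit, but it does not change the argument.
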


\begin{proof}
The strategy of the proof is to show that the right hand side of the given formula satisfies the initial value problem of Proposition~\ref{prop:3.2}. Uniqueness will then tell us that both sides of the formula are equal. To start with, it is clear that $(\iota_t^* g)^{-1} \Phi^{\eta}(t) \iota_0^* g$ satisfies the initial condition. Let us calculate its derivative with respect to $t$. By Proposition~\ref{prop:3.2}, we have
\begin{align*}
\frac{\uud}{\ud t} \left[ (\iota_t^* g)^{-1} \Phi^{\eta}(t) \iota_0^* g \right] &= \frac{\uud}{\ud t} (\iota_t^* g)^{-1} \Phi^{\eta}(t) \iota_0^* g + (\iota_t^* g)^{-1}\frac{\ud \Phi^{\eta}(t)}{\ud t}  \iota_0^* g \\
&=\frac{\uud}{\ud t} (\iota_t^* g)^{-1} \Phi^{\eta}(t) \iota_0^* g + (\iota_t^* g)^{-1}\left( \iota_{t}^{*} \ui_{\frac{\partial}{\partial t}} \eta \wedge \Phi^{\eta}(t)\right) \iota_0^* g \\
&= \frac{\uud}{\ud t} (\iota_t^* g)^{-1} \Phi^{\eta}(t) \iota_0^* g + \left[(\iota_t^* g)^{-1} \iota_{t}^{*} \ui_{\frac{\partial}{\partial t}} \eta \iota_t^* g  \right] \wedge \left[ (\iota_t^* g)^{-1}  \Phi^{\eta}(t) \iota_0^* g\right]
\end{align*}
On the other hand, a straightforward calculation shows that
$$
\frac{\uud}{\ud t} (\iota_t^* g)^{-1} = - (\iota_t^* g)^{-1}  \frac{\uud}{\ud t} (\iota_t^* g) (\iota_t^* g)^{-1}.
$$
Furthermore, using the assumption, we have
$$
(\iota_t^* g)^{-1} \iota_{t}^{*} \ui_{\frac{\partial}{\partial t}} \eta \iota_t^* g =\iota_{t}^{*} \ui_{\frac{\partial}{\partial t}} \omega + (\iota_t^* g)^{-1} \frac{\uud}{\ud t} (\iota_t^* g).
$$
Substituting into our previous equation we get
\begin{align*}
\frac{\uud}{\ud t} \left[ (\iota_t^* g)^{-1} \Phi^{\eta}(t) \iota_0^* g \right]  &=  - (\iota_t^* g)^{-1}  \frac{\uud}{\ud t} (\iota_t^* g) (\iota_t^* g)^{-1}\Phi^{\eta}(t) \iota_0^* g  + \iota_{t}^{*} \ui_{\frac{\partial}{\partial t}} \omega \wedge \left[ (\iota_t^* g)^{-1}  \Phi^{\eta}(t) \iota_0^* g\right] \\
&\phantom{=}\,\,  + (\iota_t^* g)^{-1}  \frac{\uud}{\ud t} (\iota_t^* g)  (\iota_t^* g)^{-1}  \Phi^{\eta}(t) \iota_0^* g \\
&= \iota_{t}^{*} \ui_{\frac{\partial}{\partial t}} \omega \wedge \left[ (\iota_t^* g)^{-1}  \Phi^{\eta}(t) \iota_0^* g\right],
\end{align*}
as required.
\end{proof}

It will be convenient for our purposes to give an equivalent alternative expression for $\Phi^{\omega}(t)$ in the special case in which $\omega$ is homogeneous. To do this we need some notation. For each $t \in [0,1]$, we write $\Delta_n(t)$ for the $n$-simplex of width $t$. The geometric realization of $\Delta_n(t)$ that we take is
$$
\Delta_n(t) = \{(s_1,\dots, s_n) \in \RR^n \mid  t \geq s_1 \geq \cdots \geq s_n \geq 0 \}.
$$
For any $i =1,\dots, n$, we also denote by $\pi_{i} \colon M \times \Delta_n(t) \to M \times [0,t]$ the natural projection defined by $
\pi_{i} (x, (s_1,\dots,s_n)) = (x, s_i)$ for all $x \in M$ and $(s_1,\dots,s_n) \in \Delta_n(t)$. Instead of $\Delta_n(1)$, we will simply write $\Delta_n$.

With this notation, we have the following.

\begin{proposition} \label{prop:3.4}
If $\omega$ is a homogeneous element of $\Omega^{\sbullet} (M \times [0,1], \End (V))$, then
\begin{equation*}
\Phi^{\omega}(t) = \id_V + \sum_{n=1}^{\infty}(-1)^{\varepsilon(n)\vert \omega \vert} \int_{\Delta_{n}(t)} \pi_{1}^* \omega \wedge \pi_{2}^* \omega \wedge \cdots \wedge \pi_{n}^* \omega,
\end{equation*}
where $\varepsilon(n)= \sum_{i=1}^{n-1}(n-i)$.
\end{proposition}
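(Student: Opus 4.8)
The plan is to reduce the $n$-fold fibre integral on the right-hand side to the iterated integral defining $\Phi_n^{\omega}(t)$, treating each homogeneous term $\Phi_n^{\omega}(t)$ separately and then summing over $n$, the sum being convergent by Lemma~\ref{lem:3.1}. The entire content of the statement is the bookkeeping of signs, and this is precisely where the homogeneity of $\omega$ is used: it guarantees that all $n$ factors carry the same degree, so that a single global sign $(-1)^{\varepsilon(n)\vert\omega\vert}$ can be pulled out of the integral.

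First I would split $\omega$ into its horizontal and vertical parts relative to the projection $M \times [0,1] \to M$. Writing $s$ for the coordinate on $[0,1]$, I set $\omega^{(1)} = \ui_{\partial/\partial s}\,\omega$ and $\omega^{(0)} = \omega - \ud s \wedge \omega^{(1)}$, so that $\omega = \omega^{(0)} + \ud s \wedge \omega^{(1)}$ with neither $\omega^{(0)}$ nor $\omega^{(1)}$ containing $\ud s$. Pulling back along $\pi_i \colon M \times \Delta_n(t) \to M \times [0,t]$ and using $\pi_i^*(\ud s) = \ud s_i$, I obtain $\pi_i^* \omega = \pi_i^* \omega^{(0)} + \ud s_i \wedge \pi_i^* \omega^{(1)}$, where both $\pi_i^* \omega^{(0)}$ and $\pi_i^* \omega^{(1)}$ involve none of the fibre differentials $\ud s_1, \dots, \ud s_n$. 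I would then expand the product $\pi_1^* \omega \wedge \cdots \wedge \pi_n^* \omega$ and observe that integration over the $n$-dimensional simplex $\Delta_n(t)$ kills every term of fibre degree below $n$. Since the only source of a factor $\ud s_i$ is the vertical part of $\pi_i^* \omega$, and each such part supplies exactly the one differential $\ud s_i$, the unique surviving term is
\[
(\ud s_1 \wedge \pi_1^* \omega^{(1)}) \wedge \cdots \wedge (\ud s_n \wedge \pi_n^* \omega^{(1)}).
\]
Here $\pi_i^* \omega^{(1)}$ restricts on the fibre to $\iota_{s_i}^* \ui_{\partial/\partial s}\,\omega$, which is exactly the $i$-th factor in the integrand defining $\Phi_n^{\omega}(t)$.

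Finally I would reorganize this surviving term into the shape (fibre volume) $\wedge$ (base form). Each $\pi_i^* \omega^{(1)}$ is homogeneous of degree $\vert\omega\vert - 1$, so collecting the $n$ differentials into the block $\ud s_1 \wedge \cdots \wedge \ud s_n$ requires moving them past forms of that degree; a Koszul count over the inverted pairs gives the sign $(-1)^{(\vert\omega\vert-1)\varepsilon(n)}$, with $\varepsilon(n) = \binom{n}{2}$. Combining this with the orientation convention on $\Delta_n(t)$ for which $\ud s_n \wedge \cdots \wedge \ud s_1$ is positive — the order dictated by the nested limits $\ud s_n \cdots \ud s_1$ in the definition of $\Phi_n^{\omega}(t)$ — contributes a further factor $(-1)^{\varepsilon(n)}$, and evaluating the resulting iterated integral against Lebesgue measure reproduces $\Phi_n^{\omega}(t)$. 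The two contributions multiply to $(-1)^{(\vert\omega\vert-1)\varepsilon(n) + \varepsilon(n)} = (-1)^{\vert\omega\vert\varepsilon(n)}$; since this sign is its own inverse, summing over $n$ yields the asserted formula.

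The main obstacle is exactly this sign reconciliation. One must fix once and for all the orientation of $\Delta_n(t)$ and the sign convention for integration along the fibre, and then verify that the Koszul signs from the reordering combine with them to give precisely $(-1)^{\varepsilon(n)\vert\omega\vert}$, with no residual factor. Everything else — the horizontal/vertical decomposition, the vanishing of the lower fibre-degree terms, and the identification of the surviving integrand with that of $\Phi_n^{\omega}(t)$ — is routine, and homogeneity of $\omega$ is what makes the final sign uniform across the whole sum.
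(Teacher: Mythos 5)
Your proof is correct and takes essentially the same route as the paper's: both unwind $\int_{\Delta_n(t)}$ into the nested iterated integral, note that only the top vertical-degree term survives, identify the restricted integrand with that of $\Phi_n^{\omega}(t)$, and account for the total Koszul sign $(-1)^{\varepsilon(n)\vert\omega\vert}$. The only difference is bookkeeping: the paper applies the contractions $\ui_{\frac{\partial}{\partial s_1}}\cdots\ui_{\frac{\partial}{\partial s_n}}$ directly to $\pi_1^*\omega\wedge\cdots\wedge\pi_n^*\omega$ (using $\ui_{\frac{\partial}{\partial s_i}}\pi_j^*\omega=0$ for $i\neq j$) and reads off the sign $(-1)^{((n-1)+(n-2)+\cdots+1)\vert\omega\vert}$ in one pass, whereas you split it as $(-1)^{\varepsilon(n)(\vert\omega\vert-1)}\cdot(-1)^{\varepsilon(n)}$ via the decomposition $\omega=\omega^{(0)}+\ud s\wedge\omega^{(1)}$ and the orientation making $\ud s_n\wedge\cdots\wedge\ud s_1$ positive, which is precisely the convention implicit in the paper's defining formula for the simplex integral, so the two computations agree with no residual sign.
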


\begin{proof}
For each $(s_1,\dots,s_n) \in \Delta_n(t)$, let $\iota_{(s_1,\dots,s_n)} \colon M \to M \times \Delta_n(t)$ denote the natural inclusion given by $\iota_{(s_1,\dots,s_n)}(x)=(x, (s_1,\dots,s_n))$. Then, by definition,
\begin{align*}
& \int_{\Delta_{n}(t)} \pi_{1}^* \omega \wedge \pi_{2}^* \omega \wedge \cdots \wedge \pi_{n}^* \omega \\
  &\quad = \int_{0}^{t}  \int_{0}^{s_1} \cdots \int_{0}^{s_{n-1}} \iota_{(s_1,\dots,s_n)}^* \ui_{\frac{\partial}{\partial s_1}}  \ui_{\frac{\partial}{\partial s_2}} \cdots \ui_{\frac{\partial}{\partial s_n}} \left(\pi_{1}^* \omega \wedge \pi_{2}^* \omega \wedge \cdots \wedge \pi_{n}^* \omega \right) \, \ud s_n \cdots \ud s_2 \ud s_1.
\end{align*}
But $\ui_{\frac{\partial}{\partial s_i}} \pi_{j}^* \omega$ is equal to $\ui_{\frac{\partial}{\partial s_i}} \pi_{i}^* \omega$ if $i=j$ and is equal to $0$ otherwise. Moreover, since $\pi_{i} \circ \iota_{(s_1,\dots,s_n)} = \iota_{s_i}$, we also have that $\iota_{(s_1,\dots,s_n)}^* \ui_{\frac{\partial}{\partial s_i}} \pi_{i}^* \omega= \iota_{s_i}^*  \ui_{\frac{\partial}{\partial s_i}} \omega$. Therefore the integrand above becomes
\begin{align*}
&\iota_{(s_1,\dots,s_n)}^* \ui_{\frac{\partial}{\partial s_1}}  \ui_{\frac{\partial}{\partial s_2}} \cdots \ui_{\frac{\partial}{\partial s_n}} \left(\pi_{1}^* \omega \wedge \pi_{2}^* \omega \wedge \cdots \wedge \pi_{n}^* \omega \right) \\
&\qquad  = (-1)^{(n-1)\vert \omega \vert + (n-2)\vert \omega \vert + \cdots + \vert \omega \vert} \iota_{s_1}^*  \ui_{\frac{\partial}{\partial s_1}} \omega \wedge \iota_{s_2}^*  \ui_{\frac{\partial}{\partial s_2}} \omega  \wedge \cdots \wedge \iota_{s_n}^*  \ui_{\frac{\partial}{\partial s_n}} \omega.
\end{align*}
The desired conclusion now follows from the definition of $\Phi^{\omega}(t)$.
\end{proof}

We now  provide a simple observation which will be useful in its own right.

\begin{lemma}\label{lem:3.5}
If $\omega_1,\dots,\omega_n$ is a collection of homogeneous elements in $\Omega^{\sbullet}(M \times [0,1],\End(V))$, then
\begin{align*}
\ud \int_{\Delta_{n}(t)} & \, \pi_{1}^* \omega_1 \wedge \cdots \wedge \pi_{n}^* \omega_n \\
=&  \sum_{i=1}^{n} (-1)^{n + \sum_{j=1}^{i-1}\vert \omega_j \vert} \int_{\Delta_{n}(t)} \pi_{1}^* \omega_1  \wedge \cdots \wedge \pi_{i-1}^* \omega_{i-1} \wedge  \pi_{i}^*\ud \omega_{i}  \wedge \pi_{i+1}^* \omega_{i+1} \wedge \cdots \wedge \pi_{n}^* \omega_n \\
&+  \sum_{i=1}^{n-1}(-1)^{i} \int_{\Delta_{n-1}(t)} \pi_{1}^* \omega_1  \wedge \cdots \wedge \pi_{i-1}^* \omega_{i-1} \wedge  \pi_{i}^* (\omega_{i} \wedge \omega_{i+1})  \wedge \pi_{i+1}^* \omega_{i+2} \wedge \cdots \wedge \pi_{n-1}^* \omega_n \\
&+  (-1)^{n} \left( \int_{\Delta_{n-1}(t)} \pi_{1}^* \omega_1  \wedge \cdots \wedge \pi_{n-1}^* \omega_{n-1} \right) \wedge \iota_0^* \omega_n \\
&+  (-1)^{(n-1) \vert \omega_1\vert} \iota_t^{*} \omega_1 \wedge \left( \int_{\Delta_{n-1}(t)} \pi_{1}^* \omega_2  \wedge \cdots \wedge \pi_{n-1}^* \omega_{n} \right).
\end{align*}
\end{lemma}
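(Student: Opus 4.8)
The plan is to regard the iterated integral as a fiber integration and to invoke the version of Stokes' theorem appropriate to integration along a fiber with boundary. Concretely, I would consider the projection $\rho \colon M \times \Delta_n(t) \to M$ and the $\End(V)$-valued form $\beta = \pi_1^* \omega_1 \wedge \cdots \wedge \pi_n^* \omega_n$ on $M \times \Delta_n(t)$, so that $\int_{\Delta_n(t)} \pi_1^* \omega_1 \wedge \cdots \wedge \pi_n^* \omega_n = \rho_* \beta$ is exactly fiber integration over the $n$-dimensional fiber $\Delta_n(t)$. The key input is the fiberwise Stokes formula
$$
\ud\, \rho_* \beta = (-1)^n \rho_*(\ud \beta) + (-1)^n (\partial \rho)_* \beta,
$$
where $\ud$ on the right denotes the full de Rham differential on $M \times \Delta_n(t)$ and $(\partial \rho)_*$ denotes fiber integration over the boundary $\partial \Delta_n(t)$; the factor $(-1)^n$ records the commutation of the degree-$1$ operator $\ud$ with the degree-$(-n)$ operator $\rho_*$.

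For the bulk term I would expand $\ud \beta$ by the graded Leibniz rule together with $\ud \circ \pi_i^* = \pi_i^* \circ \ud$. Since each $\omega_j$ is homogeneous of total degree $\vert \omega_j \vert$ and pullback preserves total degree, moving $\ud$ into the $i$-th factor produces the Koszul sign $(-1)^{\sum_{j<i} \vert \omega_j \vert}$, and hence $(-1)^n \rho_*(\ud \beta)$ reproduces precisely the first sum of the statement.

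The remaining work is to identify $(\partial \rho)_* \beta$ with the last three lines. Here I would decompose $\partial \Delta_n(t)$ into its codimension-one faces: the interior faces $\{s_i = s_{i+1}\}$ for $i = 1, \dots, n-1$, the bottom face $\{s_n = 0\}$, and the top face $\{s_1 = t\}$. On an interior face $\pi_i$ and $\pi_{i+1}$ restrict to the same projection, whence $\pi_i^* \omega_i \wedge \pi_{i+1}^* \omega_{i+1}$ restricts to the pullback of $\omega_i \wedge \omega_{i+1}$ at the common height and the face integral becomes the corresponding integral over $\Delta_{n-1}(t)$, yielding the second sum. On $\{s_n = 0\}$ the factor $\pi_n^* \omega_n$ restricts to the pure $M$-form $\iota_0^* \omega_n$, which sits at the right end and factors out of the integral, giving the third term; on $\{s_1 = t\}$ the factor $\pi_1^* \omega_1$ restricts to $\iota_t^* \omega_1$, which is pulled to the front across the $(n-1)$-dimensional fiber integration, producing the Koszul sign $(-1)^{(n-1)\vert \omega_1 \vert}$ of the fourth term.

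The main obstacle is the bookkeeping of signs and their assembly into the displayed coefficients. Three sources must be reconciled: the global $(-1)^n$ from commuting $\ud$ past $\rho_*$; the Koszul signs coming from the graded Leibniz rule and from transporting pure $M$-forms across fiber integrations (these account for the $\vert \omega_j \vert$-dependence in the first and fourth terms); and the orientation signs of the faces of $\Delta_n(t)$, computed with the outward-normal-first convention, which produce the alternating $(-1)^i$ on the interior faces and the signs on the two extreme faces. I would verify in particular that on the interior and bottom faces the $\End(V)$-degrees drop out --- on $\{s_i = s_{i+1}\}$ because the two merging factors occupy adjacent slots, and on $\{s_n = 0\}$ because the surviving simplex directions $\ud s_1, \dots, \ud s_{n-1}$ all come from $\omega_1, \dots, \omega_{n-1}$ --- so that only the orientation signs $(-1)^i$ and $(-1)^n$ survive.
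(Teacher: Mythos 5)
Your proposal is correct, but it follows a genuinely different route from the paper. The paper's proof is a one-line reduction plus induction: it rewrites the iterated integral by slicing off the first simplex coordinate,
$\int_{\Delta_{n}(t)} \pi_{1}^* \omega_1 \wedge \cdots \wedge \pi_{n}^* \omega_n = (-1)^{(n-1)\vert \omega_1 \vert} \int_0^{t} \iota_s^* \ui_{\frac{\partial}{\partial s}} \omega_1 \wedge \bigl(  \int_{\Delta_{n-1}(s)} \pi_{1}^* \omega_2 \wedge \cdots \wedge \pi_{n-1}^* \omega_n\bigr) \ud s$,
and then inducts on $n$, so the four lines of the formula emerge from differentiation under the integral sign, the identity $\frac{\ud}{\ud s}\iota_s^*\omega = \iota_s^*\ui_{\frac{\partial}{\partial s}}\ud\omega + \ud\,\iota_s^*\ui_{\frac{\partial}{\partial s}}\omega$, the fundamental theorem of calculus, and the inductive hypothesis applied inside the $s$-integral. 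You instead apply fiberwise Stokes once and read the four lines off directly: bulk term equals the first sum, the interior faces $\{s_i = s_{i+1}\}$ give the second sum, the bottom face $\{s_n = 0\}$ the third line, and the top face $\{s_1 = t\}$ the fourth. This is the classical Chen--Igusa-style argument; it is conceptually more transparent --- your face signs $(-1)^0, (-1)^1, \dots, (-1)^n$ are exactly the cosimplicial boundary signs, and your analysis of where Koszul signs do and do not appear (none on interior faces since the merging factors are adjacent, none on the bottom face since $\iota_0^*\omega_n$ exits on the right, $(-1)^{(n-1)\vert\omega_1\vert}$ on the top face since $\iota_t^*\omega_1$ crosses an $(n-1)$-dimensional fiber integration) is correct --- but it requires setting up an orientation convention for integration along a fiber with boundary, which the paper's elementary induction sidesteps.

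One bookkeeping caveat: your displayed master formula $\ud\,\rho_*\beta = (-1)^n \rho_*(\ud\beta) + (-1)^n(\partial\rho)_*\beta$ does not match the outward-normal-first convention you announce. Already for $n=1$ with $\beta = \pi_1^*\omega_1$, writing $\beta = a + \ud s \wedge b$ and using the paper's convention $\rho_*\beta = \int_0^t \iota_s^*\ui_{\frac{\partial}{\partial s}}\beta\,\ud s$, a direct computation gives $\ud\,\rho_*\beta = -\rho_*(\ud\beta) + \iota_t^*\omega_1 - \iota_0^*\omega_1$, which is what the lemma asserts; but outward normals orient $\partial[0,t]$ as $\{t\} - \{0\}$, so your formula as written would yield $\iota_0^*\omega_1 - \iota_t^*\omega_1$ instead. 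Either the boundary term should enter with $(-1)^{n+1}$, or $\partial\Delta_n(t)$ must carry the opposite orientation. Since the per-face signs you ultimately assign agree with the statement, this is a fixable sign slip in the intermediate formula --- precisely the reconciliation you flag as the main obstacle --- not a structural gap in the approach.
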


\begin{proof}
It is straightforward to check that
$$
 \int_{\Delta_{n}(t)} \pi_{1}^* \omega_1 \wedge \cdots \wedge \pi_{n}^* \omega_n = (-1)^{(n-1)\vert \omega_1 \vert} \int_0^{t} \iota_s^* \ui_{\frac{\partial}{\partial s}} \omega_1 \wedge \left(  \int_{\Delta_{n-1}(s)} \pi_{1}^* \omega_2 \wedge \cdots \wedge \pi_{n-1}^* \omega_n\right) \, \ud s,
$$
so the result follows by induction on $n$.
\end{proof}

As a direct consequence of Lemma~\ref{lem:3.5}, we can prove the following formula, which will be used in what follows.
\begin{proposition}\label{prop:3.6}
If $\omega$ is a homogeneous element of $\Omega^{\sbullet} (M \times [0,1], \End (V))$, then
\begin{align*}
 \ud \Phi^{\omega}(t)  = &\sum_{n=1}^{\infty} \sum_{i=1}^{n} (-1)^{n + (i -1 + \varepsilon(n) )\vert \omega \vert} \int_{\Delta_{n}(t)} \pi_{1}^* \omega  \wedge \cdots \wedge \pi_{i-1}^* \omega \wedge  \pi_{i}^*\ud \omega  \wedge \pi_{i+1}^* \omega \wedge \cdots \wedge \pi_{n}^* \omega \\
&+ \sum_{n=2}^{\infty} \sum_{i=1}^{n-1}(-1)^{i + \varepsilon(n)\vert \omega \vert} \int_{\Delta_{n-1}(t)} \pi_{1}^* \omega  \wedge \cdots \wedge \pi_{i-1}^* \omega \wedge  \pi_{i}^* (\omega \wedge \omega)  \wedge \pi_{i+1}^* \omega \wedge \cdots \wedge \pi_{n}^* \omega \\
&+ \sum_{n=1}^{\infty} (-1)^{n + \varepsilon(n) \vert \omega \vert} \left( \int_{\Delta_{n-1}(t)} \pi_{1}^* \omega  \wedge \cdots \wedge \pi_{n-1}^* \omega \right) \wedge \iota_0^* \omega \\
&+ \sum_{n=1}^{\infty} (-1)^{(n-1 + \varepsilon(n)) \vert \omega\vert} \iota_t^{*} \omega \wedge \left( \int_{\Delta_{n-1}(t)} \pi_{1}^* \omega  \wedge \cdots \wedge \pi_{n-1}^* \omega \right).
\end{align*}
\end{proposition}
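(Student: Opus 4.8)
The plan is to differentiate the series representation of $\Phi^{\omega}(t)$ supplied by Proposition~\ref{prop:3.4} term by term and then apply Lemma~\ref{lem:3.5} to each summand. Since $\omega$ is homogeneous, Proposition~\ref{prop:3.4} gives
$$
\Phi^{\omega}(t) = \id_V + \sum_{n=1}^{\infty}(-1)^{\varepsilon(n)\vert \omega \vert} \int_{\Delta_{n}(t)} \pi_{1}^* \omega \wedge \cdots \wedge \pi_{n}^* \omega,
$$
so that, once the interchange of $\ud$ with the infinite sum has been justified,
$$
\ud \Phi^{\omega}(t) = \sum_{n=1}^{\infty}(-1)^{\varepsilon(n)\vert \omega \vert} \, \ud \int_{\Delta_{n}(t)} \pi_{1}^* \omega \wedge \cdots \wedge \pi_{n}^* \omega.
$$
This justification mirrors the convergence argument behind Lemma~\ref{lem:3.1}: the simplices $\Delta_n(t)$ contribute a factor of order $t^n/n!$, so on compact subsets of $M$ the differentiated series still decays factorially in $n$ and converges uniformly, whence termwise differentiation is legitimate.

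The substance of the argument is to evaluate each $\ud \int_{\Delta_{n}(t)} \pi_{1}^* \omega \wedge \cdots \wedge \pi_{n}^* \omega$ by specializing Lemma~\ref{lem:3.5} to the case $\omega_1 = \cdots = \omega_n = \omega$. In this specialization the partial degree sums collapse to $\sum_{j=1}^{i-1} \vert \omega_j \vert = (i-1)\vert \omega \vert$ and $(n-1)\vert \omega_1 \vert = (n-1)\vert \omega \vert$. Lemma~\ref{lem:3.5} then produces four families of terms: one with an interior $\ud\omega$, one with an interior product $\omega \wedge \omega$ over $\Delta_{n-1}(t)$, one capped on the right by $\iota_0^* \omega$, and one capped on the left by $\iota_t^* \omega$, each carrying a sign depending only on $n$, $i$ and the parity of $\vert \omega \vert$.

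It remains to fold in the prefactor $(-1)^{\varepsilon(n)\vert \omega \vert}$ from Proposition~\ref{prop:3.4} and reindex. For the first family the exponent becomes $n + (i-1+\varepsilon(n))\vert \omega \vert$; for the second it becomes $i + \varepsilon(n)\vert \omega \vert$, where the $n=1$ contribution vanishes because the inner sum is empty, so the outer sum may start at $n=2$; for the third it becomes $n + \varepsilon(n)\vert \omega \vert$; and for the fourth it becomes $(n-1+\varepsilon(n))\vert \omega \vert$. Matching these four expressions term by term against the four sums in the statement completes the proof.

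The only genuinely delicate point is the sign bookkeeping in this last step, and in particular verifying that the $\varepsilon(n)$ factors entering through the definition of $\Phi^{\omega}(t)$ recombine correctly with the $(n-1)\vert\omega\vert$ and $\sum_{j=1}^{i-1}\vert\omega_j\vert$ factors appearing inside Lemma~\ref{lem:3.5}. The termwise differentiation is routine given the factorial decay behind Lemma~\ref{lem:3.1}, and no geometric input beyond Lemma~\ref{lem:3.5} is needed.
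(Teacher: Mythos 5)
Your proposal is correct and takes essentially the same route as the paper, which offers no written proof but states Proposition~\ref{prop:3.6} as a direct consequence of Lemma~\ref{lem:3.5}: one specializes $\omega_1 = \cdots = \omega_n = \omega$, folds in the prefactor $(-1)^{\varepsilon(n)\vert\omega\vert}$ from Proposition~\ref{prop:3.4}, and differentiates the series termwise. Your sign bookkeeping checks out in all four families (including the observation that the $n=1$ term of the second family is empty), and your factorial-decay remark supplies the termwise-differentiation justification the paper leaves tacit.
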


\section{The $\A_\infty$-natural transformation and homotopy invariance}\label{sec:4}

In this section we prove the main result of the paper, which is the construction of an $\A_\infty$-isomophism between the pullback functors associated to homotopic maps. This may be thought of as a categorified version of what is called the homotopy invariance of the de Rham cohomology.

Let $M$ be a smooth manifold and let $(E,D)$ be a $\infty$-local system on $M \times [0,1]$. Our first task is to show that there is an isomorphism of $\infty$-local systems between the restrictions of $(E,D)$ to $M \times \{0\}$ and $M \times \{1\}$. The following two preliminary results will clear our path.

\begin{lemma}\label{lem:4.1}
Suppose that the $\infty$-local system $(E,D)$ is trivialized over $M \times [0,1]$, that is to say, $E = (M \times [0,1])\times V$ for some $\ZZ$-graded vector space $V = \bigoplus_{k \in \ZZ} V^k$ and $D = \uud - \alpha$ for some homogeneous element $\alpha \in\Omega^{\sbullet} (M \times [0,1],\End(V))$ of total degree $1$ satisfying the Maurer-Cartan equation. Then the iterated integral $\Phi^{\alpha}(1)$ of $\alpha$ defines an isomorphism of $\infty$-local systems from $\iota_0^* (E,D)$ onto $\iota_1^* (E,D)$.
\end{lemma}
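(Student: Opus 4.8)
The plan is to read the statement as two claims about the single element $\Phi^{\alpha}(1) \in \Omega^{\sbullet}(M,\End(V))$: that it is a morphism of $\infty$-local systems in the asserted direction, and that it is invertible. First I would unwind the two restrictions. Since $\iota_s^*E = M \times V$ and the pullback functor sends $\uud - \alpha$ to $\uud - \iota_s^*\alpha$, the restrictions are $\iota_0^*(E,D) = (M\times V, \uud - \iota_0^*\alpha)$ and $\iota_1^*(E,D) = (M\times V, \uud - \iota_1^*\alpha)$, where $\iota_0^*\alpha,\iota_1^*\alpha$ are homogeneous of total degree $1$ and again satisfy the Maurer-Cartan equation (pullback commutes with $\ud$ and $\wedge$). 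By the description of morphisms between trivialized $\infty$-local systems, proving that $\Phi^{\alpha}(1)$ is an isomorphism from the first onto the second amounts to checking that it is an invertible degree-$0$ element with
\[
\ud \Phi^{\alpha}(1) = \iota_1^*\alpha \wedge \Phi^{\alpha}(1) - \Phi^{\alpha}(1) \wedge \iota_0^*\alpha .
\]
Its total degree is indeed $0$, since each contraction $\ui_{\partial/\partial s}$ lowers the total degree of $\alpha$ from $1$ to $0$.

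The heart of the matter is this differential identity, and the natural tool is Proposition~\ref{prop:3.6} applied to $\omega = \alpha$ with $\vert\alpha\vert = 1$. The Maurer-Cartan equation lets me substitute $\ud\alpha = \alpha\wedge\alpha$, turning every $\pi_i^*\ud\omega$ in the first family of terms into $\pi_i^*(\omega\wedge\omega)$. I then expect the first two families of terms to cancel one another term by term: after reindexing so that both run over configurations with $m$ points on the simplex carrying $m+1$ copies of $\alpha$ (the $\ud\omega$-family with $n=m$, the $(\omega\wedge\omega)$-family with $n=m+1$), the two signs differ precisely by $(-1)^{m-1+\varepsilon(m)-\varepsilon(m+1)}$, and using $\varepsilon(n)=\binom{n}{2}$ one has $\varepsilon(m+1)-\varepsilon(m)=m$, so this exponent is $-1$ and the terms are negatives of each other. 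What survives are the two boundary families. Rewriting the integrals over $\Delta_{n-1}(t)$ via Proposition~\ref{prop:3.4}, namely $\int_{\Delta_{n-1}(t)}\pi_1^*\alpha\wedge\cdots\wedge\pi_{n-1}^*\alpha = (-1)^{\varepsilon(n-1)}\Phi_{n-1}^{\alpha}(t)$, and using $\varepsilon(n)+\varepsilon(n-1)=(n-1)^2\equiv n-1 \pmod 2$, the two families collapse into $\iota_t^*\alpha \wedge \Phi^{\alpha}(t) - \Phi^{\alpha}(t)\wedge\iota_0^*\alpha$. Setting $t=1$ yields the required morphism identity.

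For invertibility I would decompose $\Phi^{\alpha}(1)=\sum_{p\geq0}P_p$ by partial degree, with $P_p\in\Omega^p(M,\End(V)^{-p})$. The partial-degree-$0$ part $P_0 \in \Omega^0(M,\End(V)^0)$ equals the partial-degree-$0$ part of the iterated integral, hence the time-ordered exponential of the smooth family of grading-preserving endomorphisms $s\mapsto (\iota_s^*\ui_{\partial/\partial s}\alpha)_0$; as the value at $t=1$ of the solution of a linear ODE with invertible initial datum $\id_V$, it is invertible. Since $M$ is finite-dimensional, $\Phi^{\alpha}(1)-P_0$ has strictly positive partial degree and is therefore nilpotent, so $\Phi^{\alpha}(1)=P_0\bigl(\id_V + P_0^{-1}(\Phi^{\alpha}(1)-P_0)\bigr)$ is a product of invertibles and hence invertible.

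The main obstacle is purely the sign bookkeeping in the second paragraph: making the cancellation of the $\ud\omega$- and $(\omega\wedge\omega)$-families and the reassembly of the boundary terms come out exactly right depends on the two identities $\varepsilon(m+1)-\varepsilon(m)=m$ and $\varepsilon(n)+\varepsilon(n-1)\equiv n-1$. Everything else — the identification of the restrictions, the shape of the morphism condition, and the invertibility — is routine once finite-dimensionality of $M$ is used.
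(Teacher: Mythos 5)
Your proposal is correct and matches the paper's own argument in all essentials: you establish the morphism identity $\ud \Phi^{\alpha}(1) = \iota_1^*\alpha \wedge \Phi^{\alpha}(1) - \Phi^{\alpha}(1) \wedge \iota_0^*\alpha$ by specializing Proposition~\ref{prop:3.6} to $\omega=\alpha$ with $\vert\alpha\vert=1$, substituting the Maurer--Cartan equation so the $\ud\omega$- and $(\omega\wedge\omega)$-families cancel, and collapsing the boundary families via Proposition~\ref{prop:3.4}, and you prove invertibility exactly as the paper does, by splitting off the partial-degree-$0$ piece (the parallel transport of the connection component $\alpha_1$, invertible as the solution of a linear ODE with initial value $\id_V$) and observing that the remainder is nilpotent since $M$ is finite-dimensional. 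Your sign bookkeeping, phrased through $\varepsilon(m+1)-\varepsilon(m)=m$ and $\varepsilon(n)+\varepsilon(n-1)\equiv n-1 \pmod 2$, is equivalent to the paper's use of $\varepsilon(n)=\varepsilon(n-1)+n-1$ and checks out.
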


\begin{proof}
We first show that $\Phi^{\alpha}(1)$ is a morphism of $\infty$-local systems from $\iota_0^* (E,D)$ to $\iota_1^* (E,D)$. To this end, we need to check that $(\uud - \iota_1^* \alpha) \circ \Phi^{\alpha}(1) = \Phi^{\alpha}(1) \circ (\uud - \iota_0^* \alpha)$, or, what is the same,
$$
\ud \Phi^{\alpha}(1) = \iota_1^* \alpha \wedge \Phi^{\alpha}(1) - \Phi^{\alpha}(1) \wedge \iota_0^* \alpha.
$$
Using the formula given in Proposition~\ref{prop:3.6}, we find
\begin{align*}
 \ud \Phi^{\alpha}(1)  = &\sum_{n=1}^{\infty} \sum_{i=1}^{n} (-1)^{n + i -1 + \varepsilon(n)} \int_{\Delta_{n}} \pi_{1}^* \alpha  \wedge \cdots \wedge \pi_{i-1}^* \alpha \wedge  \pi_{i}^*\ud \alpha  \wedge \pi_{i+1}^* \alpha \wedge \cdots \wedge \pi_{n}^* \alpha \\
&+ \sum_{n=2}^{\infty} \sum_{i=1}^{n-1}(-1)^{i + \varepsilon(n)} \int_{\Delta_{n-1}} \pi_{1}^* \alpha  \wedge \cdots \wedge \pi_{i-1}^* \alpha \wedge  \pi_{i}^* (\alpha \wedge \alpha)  \wedge \pi_{i+1}^* \alpha \wedge \cdots \wedge \pi_{n}^* \alpha \\
&+ \sum_{n=1}^{\infty} (-1)^{n + \varepsilon(n)} \left( \int_{\Delta_{n-1}} \pi_{1}^* \alpha  \wedge \cdots \wedge \pi_{n-1}^* \alpha \right) \wedge \iota_0^* \alpha \\
&+ \sum_{n=1}^{\infty} (-1)^{n-1 + \varepsilon(n)} \iota_1^{*} \alpha \wedge \left( \int_{\Delta_{n-1}} \pi_{1}^* \alpha  \wedge \cdots \wedge \pi_{n-1}^* \alpha \right).
\end{align*}
Next, notice that $\varepsilon(n)=\varepsilon(n-1) + n- 1$, from which we obtain $(-1)^{n-1+\varepsilon(n)}=(-1)^{\varepsilon(n-1)}$ and $(-1)^{n+\varepsilon(n)}=(-1)^{\varepsilon(n-1)+1}$. Therefore,
\begin{align*}
 \ud \Phi^{\alpha}(1)  = &\sum_{n=1}^{\infty} \sum_{i=1}^{n} (-1)^{ i + \varepsilon(n-1)} \int_{\Delta_{n}} \pi_{1}^* \alpha  \wedge \cdots \wedge \pi_{i-1}^* \alpha \wedge  \pi_{i}^*\ud \alpha  \wedge \pi_{i+1}^* \alpha \wedge \cdots \wedge \pi_{n}^* \alpha \\
&- \sum_{n=2}^{\infty} \sum_{i=1}^{n-1}(-1)^{i + \varepsilon(n-1)} \int_{\Delta_{n-1}} \pi_{1}^* \alpha  \wedge \cdots \wedge \pi_{i-1}^* \alpha \wedge  \pi_{i}^* (\alpha \wedge \alpha)  \wedge \pi_{i+1}^* \alpha \wedge \cdots \wedge \pi_{n}^* \alpha \\
&- \sum_{n=1}^{\infty} (-1)^{\varepsilon(n-1)}  \left(\int_{\Delta_{n-1}} \pi_{1}^* \alpha  \wedge \cdots \wedge \pi_{n-1}^* \alpha \right) \wedge \iota_0^* \alpha \\
&+ \sum_{n=1}^{\infty} (-1)^{\varepsilon(n-1)} \iota_1^{*} \alpha \wedge \left(  \int_{\Delta_{n-1}} \pi_{1}^* \alpha  \wedge \cdots \wedge \pi_{n-1}^* \alpha \right).
\end{align*}
But $\alpha$ satisfies the Maurer-Cartan equation, so that $\ud \alpha - \alpha \wedge \alpha = 0$. Thus the first and the second term on the right hand side of this equality cancel out, leaving us with
\begin{align*}
 \ud \Phi^{\alpha}(1)  &=\iota_1^{*} \alpha \wedge \left( \sum_{n=1}^{\infty} (-1)^{\varepsilon(n-1)}   \int_{\Delta_{n-1}} \pi_{1}^* \alpha  \wedge \cdots \wedge \pi_{n-1}^* \alpha \right) \\
 &\phantom{=}\quad\quad - \left(\sum_{n=1}^{\infty} (-1)^{\varepsilon(n-1)} \int_{\Delta_{n-1}} \pi_{1}^* \alpha  \wedge \cdots \wedge \pi_{n-1}^* \alpha \right) \wedge \iota_0^* \alpha \\
 &=  \iota_1^{*} \alpha \wedge \Phi^{\alpha}(1) - \Phi^{\alpha}(1) \wedge \iota_0^* \alpha,
\end{align*}
as desired.

It remains to show that $\Phi^{\alpha}(1)$ is invertible. For this, we decompose $\Phi^{\alpha}(1)= \sum_{p \geq 0} \Phi^{\alpha}_p(1)$, where $\Phi^{\alpha}_p(1)$ is of partial degree $p$ with respect to the total $\ZZ$-grading on $\Omega^{\sbullet}(M, \End(V))$. From the definition, it is plain to see that $\Phi_0^{\alpha}(1) = \Phi^{\alpha_1}(1)$, with $\alpha_1$ being the component of $\alpha$ of partial degree $1$ with respect to the total $\ZZ$-grading on $\Omega^{\sbullet}(M \times [0,1], \End(V))$. Since the latter defines a connection which preserves the $\ZZ$-grading, $\Phi_0^{\alpha}(1)$ is nothing but the parallel transport with respect to such connection along the paths $t \mapsto M \times \{t\}$. Thus, as is well known, $\Phi_0^{\alpha}(1)$ is invertible. Therefore $\Phi^{\alpha}(1)$ is the sum of an invertible element and a nilpotent element. From this it follows easily that $\Phi^{\alpha}(1)$ must necessarily be invertible.
\end{proof}

\begin{lemma}\label{lem:4.2}
Suppose that the $\infty$-local system $(E,D)$ is trivialized over $M \times [0,1]$ as both $E = (M \times [0,1])\times V$ and $E=(M \times [0,1]) \times W$ for some $\ZZ$-graded vector spaces $V = \bigoplus_{k \in \ZZ}V^k$ and $W = \bigoplus_{k \in \ZZ} W^k$ with $D= \uud - \alpha$ and $D = \uud - \beta$ for some homogenous elements $\alpha \in \Omega^{\sbullet}(M \times [0,1],\End(V))$ and $\beta \in \Omega^{\sbullet}(M \times [0,1],\End(W))$ of total degree $1$ satisfying the Maurer-Cartan equation. Let $g \in \Omega^0(M \times [0,1],\Hom(V,W))$ be a transition isomorphism associated with these trivializations. Then
$$
\Phi^{\alpha}(1) = (\iota_1^* g)^{-1} \Phi^{\beta}(1) \iota_0^* g.
$$
\end{lemma}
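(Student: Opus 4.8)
The plan is to recognize the statement as a direct instance of the gauge invariance of iterated integrals proved in Proposition~\ref{prop:3.3}, so the only real work is to verify that the transition isomorphism $g$ satisfies the hypotheses of that proposition.

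First I would unwind what it means for $g \in \Omega^0(M \times [0,1],\Hom(V,W))$ to be a transition isomorphism for the two trivializations. Both trivializations present the \emph{same} superconnection $D$, once as $\uud - \alpha$ relative to $V$ and once as $\uud - \beta$ relative to $W$. Viewing $g$ as the induced identification $\Omega^0(M \times [0,1],V) \to \Omega^0(M \times [0,1],W)$, the fact that it is a morphism (indeed isomorphism) of $\infty$-local systems means it intertwines these two descriptions of $D$, that is, $(\uud - \beta) \circ g = g \circ (\uud - \alpha)$. As recalled in \S\ref{subsec:2.2}, rewriting this intertwining relation with $g$ regarded as an element of $\Omega^0(M \times [0,1],\Hom(V,W))$ yields precisely the gauge transformation equation
$$
\alpha = g^{-1} \beta g - g^{-1} \ud g.
$$
Moreover $g$ is invertible by hypothesis, since it is a transition \emph{isomorphism}.

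With this identity established, I would simply invoke Proposition~\ref{prop:3.3} with $\omega = \alpha$ and $\eta = \beta$ and the given $g$. The proposition then gives
$$
\Phi^{\alpha}(t) = (\iota_t^* g)^{-1} \Phi^{\beta}(t) \iota_0^* g
$$
for all $t \in [0,1]$, and specializing to $t = 1$ produces exactly the asserted formula $\Phi^{\alpha}(1) = (\iota_1^* g)^{-1} \Phi^{\beta}(1) \iota_0^* g$.

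I do not expect a genuine obstacle here: the content of the lemma is entirely carried by Proposition~\ref{prop:3.3}, whose proof already handles the differential-equation argument (differentiating the right-hand side and using the gauge relation to match the initial value problem of Proposition~\ref{prop:3.2}). The only point requiring care is the bookkeeping translation between ``transition isomorphism of $\infty$-local systems'' and the gauge equation $\alpha = g^{-1}\beta g - g^{-1}\ud g$, including the placement of $g$ versus $g^{-1}$ and the sign convention for $g^{-1}\ud g$; once those conventions are matched to \S\ref{subsec:2.2}, the lemma follows immediately.
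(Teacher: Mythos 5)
Your proposal is correct and follows exactly the paper's own argument: identify the transition isomorphism with the gauge relation $\alpha = g^{-1}\beta g - g^{-1}\ud g$ from \S\ref{subsec:2.2}, then apply Proposition~\ref{prop:3.3} at $t=1$. The extra bookkeeping you spell out (deriving the gauge equation from the intertwining condition $(\uud - \beta)\circ g = g \circ (\uud - \alpha)$) is exactly what the paper leaves implicit, so there is nothing to add.
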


\begin{proof}
As explained in Section~\ref{subsec:2.2}, the transformation rule from $\beta$ to $\alpha$ can be expressed as $\alpha= g^{-1}\beta g - g^{-1} \ud g$. Hence the desired conclusion follows by applying Proposition~\ref{prop:3.3} with $t=1$.
\end{proof}

As an easy application of the above, we derive the following.

\begin{proposition}\label{prop:4.3}
Let $(E,D)$ be an $\infty$-local system over $M \times [0,1]$. Then there is an isomorphism of $\infty$-local systems from $\iota_0^* (E,D)$ onto $\iota_1^* (E,D)$.
\end{proposition}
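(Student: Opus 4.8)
The plan is to reduce the general case to the trivialized situation handled by Lemma~\ref{lem:4.1}, using Lemma~\ref{lem:4.2} as the gluing data. The point to keep in mind is that the bundle $E$ over $M \times [0,1]$ need not be globally trivial, so I cannot apply Lemma~\ref{lem:4.1} directly. Instead I would cover $M$ by opens over whose ``tubes'' $E$ trivializes, produce a local iterated-integral isomorphism on each tube, and then verify with Lemma~\ref{lem:4.2} that these local isomorphisms are independent of the chosen trivialization, so that they patch into a single global isomorphism.

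First I would establish that every $x \in M$ has an open neighbourhood $U$ with $E|_{U \times [0,1]}$ trivial. Fixing $x$, local triviality of $E$ together with compactness of $[0,1]$ lets me cover $\{x\} \times [0,1]$ by finitely many product trivializing neighbourhoods, yielding a single $U \ni x$ and a partition $0 = t_0 < \cdots < t_N = 1$ with $E$ trivial over each $U \times [t_i,t_{i+1}]$; gluing these slice trivializations inductively along $U \times \{t_i\}$ (shrinking $U$ as needed) produces a trivialization over the whole tube $U \times [0,1]$. This gives an open cover $\{U_a\}$ of $M$ with trivializations $\phi_a \colon E|_{U_a \times [0,1]} \to (U_a \times [0,1]) \times V_a$. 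On each tube I write $D = \uud - \alpha_a$ with $\alpha_a \in \Omega^{\sbullet}(U_a \times [0,1], \End(V_a))$ of total degree $1$; flatness of $D$ forces $\alpha_a$ to satisfy the Maurer--Cartan equation. Lemma~\ref{lem:4.1}, applied over $U_a$, then shows that $\Phi^{\alpha_a}(1)$ is an isomorphism of $\infty$-local systems $\iota_0^*(E,D)|_{U_a} \to \iota_1^*(E,D)|_{U_a}$; transporting it through the induced trivializations $\iota_0^* \phi_a$ and $\iota_1^* \phi_a$ of $\iota_0^* E$ and $\iota_1^* E$ yields an invertible section $\Psi_a \in \Omega^{\sbullet}(U_a, \Hom(\iota_0^* E, \iota_1^* E))^0$, closed for the differential on the morphism complex.

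Finally I would glue. On an overlap $U_{ab} = U_a \cap U_b$ the two trivializations differ by a transition isomorphism $g_{ab} \in \Omega^0(U_{ab} \times [0,1], \Hom(V_a,V_b))$, with connection forms related by $\alpha_a = g_{ab}^{-1} \alpha_b g_{ab} - g_{ab}^{-1} \ud g_{ab}$. Lemma~\ref{lem:4.2} then gives $\Phi^{\alpha_a}(1) = (\iota_1^* g_{ab})^{-1} \Phi^{\alpha_b}(1) \, \iota_0^* g_{ab}$, which upon unwinding the identifications $\iota_j^* \phi_b = (\iota_j^* g_{ab}) \circ \iota_j^* \phi_a$ is exactly the statement that $\Psi_a = \Psi_b$ on $U_{ab}$ as sections of $\Hom(\iota_0^* E, \iota_1^* E)$. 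Since being closed and being invertible are local conditions, the $\Psi_a$ patch to a single global isomorphism $\Psi \colon \iota_0^*(E,D) \to \iota_1^*(E,D)$ of $\infty$-local systems. The genuine content here is the gluing, and the point I would treat most carefully is the existence of trivializations over the full tubes $U_a \times [0,1]$ (the compactness-plus-slice-gluing argument) together with matching the transition conventions so that Lemma~\ref{lem:4.2} delivers precisely $\Psi_a = \Psi_b$; once Lemmas~\ref{lem:4.1} and~\ref{lem:4.2} are in place, everything else is bookkeeping.
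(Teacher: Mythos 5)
Your proof is correct and takes essentially the same route as the paper's: trivialize $(E,D)$ over tubes $U_a \times [0,1]$, apply Lemma~\ref{lem:4.1} to obtain local iterated-integral isomorphisms, and invoke the gauge-invariance relation of Lemma~\ref{lem:4.2} to check agreement on overlaps so the local pieces glue. Your only addition is the explicit compactness-plus-slice-gluing argument for trivializing $E$ over full tubes, a standard point the paper leaves implicit in its phrase ``trivialized on an open covering $\{U_i\}$ of $M$.''
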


\begin{proof}
Suppose that the $\infty$-local system $(E,D)$ is trivialized on an open covering $\{U_i\}$ of $M$. Thus $E \vert_{U_i} = (U_i \times [0,1]) \times V_i$ for some $\ZZ$-graded vector space $V_i = \bigoplus_{k \in \ZZ}V_i^k$ and $D \vert_{U_i} = \uud - \alpha_i$ for some homogeneous element $\alpha_i \in \Omega^{\sbullet}(U_i \times [0,1], \End(V_i))$ of total degree $1$ satisfying the Maurer-Cartan equation. By virtue of Lemma~\ref{lem:4.1}, the iterated integral $\Phi^{\alpha_i}(1) \in \Omega^{\sbullet}(U_i ,\End(V_i))$ determines an isomorphism of $\infty$-local systems from $\iota_0^*(E\vert_{U_i}, D\vert_{U_i})$ onto $\iota_1^*(E\vert_{U_i}, D\vert_{U_i})$. On the other hand, if $U_i \cap U_j \neq \varnothing$ and $g_{ji} \in \Omega^{0}((U_i \cap U_j) \times [0,1], \Hom(V_i,V_j))$ is the corresponding transition isomorphism, then, by Lemma~\ref{lem:4.2}, we have
$$
\Phi^{\alpha_i}(1) = (\iota_1^* g_{ji})^{-1} \Phi^{\alpha_j}(1) \iota_0^* g_{ji}.
$$
Hence, the various $\Phi^{\alpha_i}(1)$ piece together to yield a well-defined element $\Phi \in \Omega^{\sbullet}(M,\Hom(\iota_0^*E,\iota_1^*E))$, which is an isomorphism of $\infty$-local systems from $\iota_0^* (E,D)$ onto $\iota_1^* (E,D)$.
\end{proof}

From this result it would seem reasonable to assume that there is a natural isomorphism between the pullback functors $\iota_0^*,\iota_1^*\colon \Loc_{\infty} (M \times [0,1]) \to \Loc_{\infty} (M)$. However, with a careful juggling we can show that this is not the case. In fact, something more precise holds.

\begin{proposition}\label{prop:4.4}
There exists an $\A_{\infty}$-natural isomorphism $\lambda \colon \iota_0^* \Rightarrow \iota_1^*$.
\end{proposition}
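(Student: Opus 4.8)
The plan is to build $\lambda$ by the same iterated-integral recipe that produced $\lambda_0$, now allowing morphisms to be inserted along the interval. I would take $\lambda_0(X)$ to be the isomorphism $\Phi$ of $\infty$-local systems from $\iota_0^*(E,D)$ onto $\iota_1^*(E,D)$ furnished by Proposition~\ref{prop:4.3}; by construction it is a closed degree-$0$ morphism of $\Loc_{\infty}(M)$, and it is invertible, so $\lambda$ will be an $\A_{\infty}$-natural \emph{isomorphism} as soon as the higher data are produced. For $n\geq 1$, fix objects $X_0,\dots,X_n$ of $\Loc_{\infty}(M\times[0,1])$ and morphisms $f_i\in s\!\Hom(X_i,X_{i+1})$, i.e. elements $f_i\in\Omega^{\sbullet}(M\times[0,1],\Hom(E_i,E_{i+1}))$. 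Writing the superconnections locally as $D_i=\uud-\alpha^{(i)}$, I would set
\begin{equation*}
\lambda_n(f_{n-1}\otimes\cdots\otimes f_0)=\pm\int_{\Delta_n}\Phi^{\alpha^{(n)}}(1,t_1)\wedge f_{n-1}\wedge\Phi^{\alpha^{(n-1)}}(t_1,t_2)\wedge\cdots\wedge f_0\wedge\Phi^{\alpha^{(0)}}(t_n,0),
\end{equation*}
where $\Delta_n=\{1\geq t_1\geq\cdots\geq t_n\geq0\}$, the factor $f_{n-j}$ sits at height $t_j$, $\Phi^{\omega}(b,a)$ denotes the parallel transport of Section~\ref{sec:3} with the integration confined to $[a,b]$, the symbol $\int_{\Delta_n}$ is the fibre integration of the proof of Proposition~\ref{prop:3.4} (so that each $f_i$ contributes through its contraction with $\partial/\partial t$), and the sign is the one dictated by the suspension conventions. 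Expanding the holonomy factors into their defining simplex integrals rewrites $\lambda_n$ as a single Chen integral over a larger simplex in which the letters are the $\alpha^{(i)}$ and the marked insertions $f_i$. A degree count confirms that $\lambda_n$ has degree $0$ as a map out of $s\!\Hom(X_{n-1},X_n)\otimes\cdots\otimes s\!\Hom(X_0,X_1)$, and for $n=1$ the formula specialises to $\lambda_1(f_0)=\pm\int_0^1\Phi^{\alpha^{(1)}}(1,s)\wedge\bigl(\iota_s^*\ui_{\frac{\partial}{\partial s}}f_0\bigr)\wedge\Phi^{\alpha^{(0)}}(s,0)\,\ud s$.

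Before verifying the coherence relations I would check that these local expressions glue. Under a change of trivialisation the $\alpha^{(i)}$ transform by gauge transformations and the $f_i$ are conjugated by the transition isomorphisms $g_i$; applying Proposition~\ref{prop:3.3} to every holonomy factor, exactly as in Lemma~\ref{lem:4.2}, all the interior transition factors telescope and cancel at the insertion heights $t_j$, leaving precisely $\lambda_n=(\iota_1^*g_n)^{-1}\,\lambda_n'\,(\iota_0^*g_0)$. This is the cocycle condition for a section of $\Hom(\iota_0^*E_0,\iota_1^*E_n)$, so each $\lambda_n(f_{n-1}\otimes\cdots\otimes f_0)$ is a well-defined global element of $\Omega^{\sbullet}(M,\Hom(\iota_0^*E_0,\iota_1^*E_n))$.

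The heart of the proof is the defining $\A_{\infty}$-relation, which I would obtain by differentiating $\lambda_n$ through repeated use of the fibre-integration (Stokes) formula $\ud\,\pi_*\Theta=\iota_1^*\Theta-\iota_0^*\Theta-\pi_*\ud\Theta$, one interval direction at a time. Three types of terms arise. First, the exterior derivative falling on the holonomy factors produces, via the Maurer--Cartan equation $\ud\alpha^{(i)}=\alpha^{(i)}\wedge\alpha^{(i)}$ and the computation of Lemma~\ref{lem:4.1}, only boundary values of the $\alpha^{(i)}$ at the adjacent heights; the interior ones combine with the $\ud f_i$ produced by the second type of term to reassemble the morphism differential $d f_i=\partial_{D_i,D_{i+1}}f_i$ of $\Loc_{\infty}(M\times[0,1])$, giving the terms $\lambda_n(\cdots\otimes d f_i\otimes\cdots)$, while the two outermost ones, at heights $1$ and $0$, supply exactly the connection corrections $-\iota_1^*\alpha^{(n)}\wedge\lambda_n+(-1)^{|\lambda_n|}\lambda_n\wedge\iota_0^*\alpha^{(0)}$ hidden inside the differential $\partial_{\iota_0^*D_0,\iota_1^*D_n}$ of $\Loc_{\infty}(M)$. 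Second, the boundary faces of $\Delta_n$ contribute: the outer faces $t_1=1$ and $t_n=0$ collapse the extreme holonomies to the identity and push an insertion to an endpoint, producing $\iota_1^*f_{n-1}\circ\lambda_{n-1}(f_{n-2}\otimes\cdots\otimes f_0)=G(f_{n-1})\circ\lambda_{n-1}(\cdots)$ and $\lambda_{n-1}(f_{n-1}\otimes\cdots\otimes f_1)\circ\iota_0^*f_0=\lambda_{n-1}(\cdots)\circ F(f_0)$, whereas each inner face $t_j=t_{j+1}$ collapses the intermediate holonomy and merges two consecutive insertions into a composition $f_{k+1}\circ f_k$, producing the remaining summands $\lambda_{n-1}(\cdots\otimes(f_{k+1}\circ f_k)\otimes\cdots)$ of $\lambda\bigl(b(f_{n-1}\otimes\cdots\otimes f_0)\bigr)$. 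Collecting everything yields
\begin{equation*}
G(f_{n-1})\circ\lambda_{n-1}(\cdots)-(-1)^{\sum_{i=1}^{n-1}|f_i|-n+1}\lambda_{n-1}(\cdots)\circ F(f_0)=\lambda\bigl(b(f_{n-1}\otimes\cdots\otimes f_0)\bigr)+d\bigl(\lambda_n(f_{n-1}\otimes\cdots\otimes f_0)\bigr).
\end{equation*}
The main obstacle is the last mile of this computation: keeping track of all the boundary and differentiation terms and, above all, reconciling every sign with the conventions in the definitions of $b$ and of the composition differential; I would expect to quarantine this bookkeeping in Appendix~\ref{app:A}. Since $\lambda_0(X)$ is invertible, the resulting $\lambda$ is an $\A_{\infty}$-natural isomorphism, as claimed.
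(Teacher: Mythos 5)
Your proposal is correct and takes essentially the same route as the paper: $\lambda_0$ from Proposition~\ref{prop:4.3}, higher $\lambda_n$ given by Chen iterated integrals with morphism insertions along the interval, gluing across trivializations via the gauge invariance of Proposition~\ref{prop:3.3}, and the $\A_{\infty}$-relations obtained from a Stokes-type differentiation whose sign bookkeeping is deferred (exactly as in the paper) to Appendix~\ref{app:A}. The only difference is packaging: the paper defines $\lambda_n^{\{\alpha_v\}}(\xi_{n-1}\otimes\cdots\otimes\xi_0)$ as the $(0,n)$ block entry of the single iterated integral $\Phi^{\omega}(1)$ of $\omega=\sum_i\alpha_i+\sum_i\xi_i$ acting on $V=\bigoplus_i V_i$, which is precisely the word-by-word expansion of your transport-with-insertions formula and has the minor advantage that Propositions~\ref{prop:3.2} and \ref{prop:3.3} and Lemma~\ref{lem:3.5} then apply verbatim, without needing the two-parameter transports $\Phi^{\alpha}(b,a)$ your formula uses.
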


\begin{proof}
For every $\infty$-local system $(E,D)$ on $M \times [0,1]$, we let $\lambda_0 (E,D) \in \uZ^0 \Omega^{\sbullet}(M,\Hom(\iota_0^*E,\iota_1^* E))$ be the isomorphism given by Proposition~\ref{prop:4.3}. We need to define linear maps of degree $-n$
\begin{gather*}
\begin{align*}
\lambda_n \colon \Omega^{\sbullet}(M\times[0,1],\Hom(E_{n-1},E_n)) \otimes \cdots \otimes \Omega^{\sbullet}(M\times[0,1],\Hom(E_{0},E_1))  \to \Omega^{\sbullet}(M,\Hom(\iota_0^*E_0,\iota_1^* E_n))
\end{align*}
\end{gather*}
for every collection of $\infty$-local systems $(E_0,D_0),\dots,(E_n,D_n)$ on $M \times [0,1]$. As with the argument that lead to the proof of Proposition~\ref{prop:4.3}, we construct such maps by assuming first that each $\infty$-local system $(E_i,D_i)$ is trivialized over $M \times [0,1]$, that is, $E_i = (M \times [0,1]) \times V_i$ for some $\ZZ$-graded vector space $V_i = \bigoplus_{k \in \ZZ} V_i^k$ and $D_i = \uud - \alpha_i$ for some homogeneous element $\alpha_i \in \Omega^{\sbullet}(M \times [0,1], \End(V_i))$ of total degree $1$ satisfying the Maurer-Cartan equation. On that account, let us take homogeneous elements $\xi_0 \in \Omega^{\sbullet}(M\times[0,1],\Hom(V_{0},V_1)), \dots, \xi_{n-1} \in \Omega^{\sbullet}(M\times[0,1],\Hom(V_{n-1},V_n))$. If we put $V = \bigoplus_{i=0}^{n} V_i$ then both the elements $\alpha_0,\dots, \alpha_n$ and the elements $\xi_0, \dots, \xi_{n-1}$ may be seen as elements of $\Omega^{\sbullet}(M \times [0,1],\End(V))$. Thus, if we set $\omega = \sum_{i=0}^{n}\alpha_i + \sum_{i=0}^{n-1}\xi_i$, the corresponding iterated integral $\Phi^{\omega}(1)$ determines an element of $\Omega^{\sbullet}(M,\End(V))$. With this understood, we define $\lambda_n^{\{\alpha_v\}}(\xi_{n-1} \otimes \dots \otimes \xi_0) \in \Omega^{\sbullet}(M,\Hom(V_0,V_n))$ to be the $(0,n)$ block entry of $\Phi^{\omega}(1)$, and it is straightforward to verify that this prescription determines a linear map
\begin{gather*}
\begin{align*}
\lambda_n^{\{\alpha_v\}} \colon \Omega^{\sbullet}(M\times[0,1],\Hom(V_{n-1},V_n)) \otimes \cdots \otimes \Omega^{\sbullet}(M\times[0,1],\Hom(V_{0},V_1))  \to \Omega^{\sbullet}(M,\Hom(V_0,V_n)).
\end{align*}
\end{gather*}
We must check that this map has degree $-n$. To this end, notice that $\lambda_n^{\{\alpha_v\}}(\xi_{n-1} \otimes \dots \otimes \xi_0)$ is an infinite sum of terms that contain integrals of the form
\begin{gather*}
\begin{align*}
\int_{\Delta_q} \Bigg( \bigwedge_{j=1}^{i_{n-1}-1} \pi_j^*\alpha_{n}\Bigg) \wedge \pi_{i_{n-1}}^* \xi_{n-1} \wedge \Bigg( \bigwedge_{j=i_{n-1}+1}^{i_{n-2} - 1} \pi_j^*\alpha_{n-1}\Bigg) \wedge \cdots \wedge \Bigg( \bigwedge_{j=i_1+1}^{i_0 - 1} \pi_j^*\alpha_1\Bigg) \wedge \pi_{i_0}^* \xi_0 \wedge \Bigg( \bigwedge_{j=i_0+1}^{q} \pi_j^*\alpha_0\Bigg)
\end{align*}
\end{gather*}
where $q \geq n$  and $1 \leq i_{n-1} < i_{n-2} < \cdots < i_{1} < i_{0} \leq q$. Since the total degree of each of the integrands above is $\sum_{i=0}^{n-1}\vert \xi_i \vert + q - n$, it follows that the total degree of these terms is $\sum_{i=0}^{n-1}\vert \xi_i \vert - n$. From this we conclude that the map $\lambda_n^{\{\alpha_v\}}$ has degree $-n$.

Next, we must show that the linear maps $\lambda_n^{\{\alpha_v\}}$ satisfy the required relations to be an $\A_{\infty}$-natural transformation. As indicated in Section~\ref{subsec:2.1}, these relations read
\begin{align*}
\iota_1^* \xi_{n-1} \wedge \lambda_{n-1}^{\{\alpha_v\}} (\xi_{n-2} \otimes \cdots \otimes \xi_{0}) - &(-1)^{\sum_{j=1}^{n-1}\vert \xi_j \vert - n +1} \lambda_{n-1}^{\{\alpha_v\}} (\xi_{n-1} \otimes  \cdots \otimes \xi_{1}) \wedge \iota_0^*\xi_0 \\
&\quad = \lambda^{\{\alpha_v\}} ( b (\xi_{n-1} \otimes \cdots \otimes \xi_0)) + \partial_{D_0,D_n} \lambda_n^{\{\alpha_v\}}(\xi_{n-1} \otimes \dots \otimes \xi_0),
\end{align*}
for every composable chain of homogenous elements $\xi_0 \in \Omega^{\sbullet}(M \times [0,1],\Hom(V_0,V_1)),\dots, \xi_{n-1}  \in \Omega^{\sbullet}(M \times [0,1],\Hom(V_{n-1},V_{n}))$, where, bearing in mind the notations and definitions of Section~\ref{subsec:2.2},
\begin{gather*}
\begin{align*}
b (\xi_{n-1} \otimes \cdots \otimes \xi_0) =& \sum_{i=0}^{n-1}(-1)^{\sum_{j=i+1}^{n-1}\vert \xi_{j}\vert + n - i -1} \xi_{n-1} \otimes \cdots \otimes \xi_{i+1} \otimes \ud \xi_{i} \otimes \xi_{i-1} \otimes \cdots \otimes \xi_0 \\
& - \sum_{i=0}^{n-1}(-1)^{\sum_{j=i+1}^{n-1}\vert \xi_{j}\vert + n - i -1} \xi_{n-1} \otimes \cdots \otimes \xi_{i+1} \otimes (\alpha_{i+1} \wedge \xi_i) \otimes \xi_{i-1} \otimes \cdots \otimes \xi_0 \\
& + \sum_{i=0}^{n-1}(-1)^{\sum_{j=i}^{n-1}\vert \xi_{j}\vert  + n - i -1} \xi_{n-1} \otimes \cdots \otimes \xi_{i+1} \otimes (\xi_{i} \wedge \alpha_i) \otimes \xi_{i-1} \otimes \cdots \otimes \xi_0 \\
&+  \sum_{i=1}^{n-1}(-1)^{\sum_{j=i}^{n-1} \vert \xi_{j}\vert  + n - i -1} \xi_{n-1} \otimes \cdots \otimes \xi_{i+1} \otimes (\xi_{i} \wedge \xi_{i-1}) \otimes \xi_{i-2} \otimes \cdots \otimes \xi_0,
\end{align*}
\end{gather*}
and
\begin{align*}
\partial_{D_0,D_n} \lambda_n^{\{\alpha_v\}}(\xi_{n-1} \otimes \dots \otimes \xi_0) =&\, \ud \lambda_n^{\{\alpha_v\}}(\xi_{n-1} \otimes \dots \otimes \xi_0) -\iota_1^* \alpha_n \wedge \lambda_n^{\{\alpha_v\}}(\xi_{n-1} \otimes \dots \otimes \xi_0) \\
& + (-1)^{\sum_{j=0}^{n-1} \vert \xi_j \vert - n}  \lambda_n^{\{\alpha_v\}}(\xi_{n-1} \otimes \dots \otimes \xi_0) \wedge \iota_0^* \alpha_0.
\end{align*}
Therefore, we are led to verify that
\begin{align*}
&\ud \lambda_n^{\{\alpha_v\}}(\xi_{n-1} \otimes \dots \otimes \xi_0) \\
 &\quad =  \iota_1^* \alpha_n \wedge \lambda_n^{\{\alpha_v\}}(\xi_{n-1} \otimes \dots \otimes \xi_0) - (-1)^{\sum_{j=0}^{n-1} \vert \xi_j \vert - n}  \lambda_n^{\{\alpha_v\}}(\xi_{n-1} \otimes \dots \otimes \xi_0) \wedge \iota_0^* \alpha_0 \\
 &\quad \phantom{=}\,  + \iota_1^* \xi_{n-1} \wedge \lambda_{n-1}^{\{\alpha_v\}} (\xi_{n-2} \otimes \cdots \otimes \xi_{0}) - (-1)^{\sum_{j=1}^{n-1}\vert \xi_j \vert - n +1} \lambda_{n-1}^{\{\alpha_v\}} (\xi_{n-1} \otimes  \cdots \otimes \xi_{1}) \wedge \iota_0^*\xi_0 \\
&\quad \phantom{=}\, -\lambda^{\{\alpha_v\}} ( b (\xi_{n-1} \otimes \cdots \otimes \xi_0)),
\end{align*}
for homogenous elements $\xi_0 \in \Omega^{\sbullet}(M \times [0,1],\Hom(V_0,V_1)),\dots, \xi_{n-1}  \in \Omega^{\sbullet}(M \times [0,1],\Hom(V_{n-1},V_{n}))$. We relegate the proof of this identity to Appendix~\ref{app:A}.

We now examine what happens to the above construction when we change the trivialization of each $\infty$-local system $(E_i,D_i)$. So assume that also $E_i = (M \times [0,1]) \times W_i$ for some $\ZZ$-graded vector space $W_i = \bigoplus_{k \in \ZZ} W_i^{k}$ and $D_i = \uud - \beta_i$ for some homogeneous element $\beta_i \in \Omega^{\sbullet}(M \times [0,1],\End(W_i))$ of total degree $1$ satisfying the Maurer-Cartan equation. We let $g_i \in \Omega^0 (M \times [0,1],\Hom(V_i,W_i))$ denote the associated transition isomorphisms, so that the transformation rule from $\beta_i$ to $\alpha_i$ is expressed as $\alpha_i = g_i^{-1} \beta_i g_i - g_i^{-1} \ud g_i$. Consider a composable chain of homogenous elements $\xi_0 \in \Omega^{\sbullet}(M \times [0,1],\Hom(V_0,V_1)),\dots, \xi_{n-1}  \in \Omega^{\sbullet}(M \times [0,1],\Hom(V_{n-1},V_{n}))$ and set up another composable chain of homogenous elements $\zeta_0 \in \Omega^{\sbullet}(M \times [0,1],\Hom(W_0,W_1)),\dots, \zeta_{n-1}  \in \Omega^{\sbullet}(M \times [0,1],\Hom(W_{n-1},W_{n}))$ by means of the formula $\xi_i = g_{i+1}^{-1} \zeta_i g_i$ for  $i = 0,\dots,n-1$. Similarly as above, we put $W = \bigoplus_{i=0}^{n} W_i$, $\eta= \sum_{i=0}^{n}\beta_i + \sum_{i=0}^{n-1} \zeta_i$ and define $\lambda_n^{\{\beta_v\}}(\zeta_{n-1}\otimes \cdots \otimes \zeta_0) \in \Omega^{\sbullet}(M,\Hom(W_0,W_n))$ to be the $(0,n)$ entry of the iterated integral $\Phi^{\eta}(1) \in \Omega^{\sbullet}(M,\End(W))$. We claim that the following relation holds:
$$
\lambda_n^{\{\alpha_v\}}(\xi_{n-1}\otimes \cdots \otimes \xi_0) = (\iota_1^* g_n)^{-1} \lambda_n^{\{\beta_v\}}(\zeta_{n-1}\otimes \cdots \otimes \zeta_0) \iota_0^* g_0.
$$
To substantiate the claim, we set $g = \sum_{i=0}^{n} g_i \in \Omega^0(M \times [0,1], \Hom(V,W))$ and observe that, by definition, $\omega= g^{-1}\eta g - g^{-1} \ud g$. Hence, by applying Proposition~\ref{prop:3.3} with $t=1$, we have that
$$
\Phi^{\omega}(1) = (\iota_1^* g)^{-1} \Phi^{\eta}(1) \iota_0^* g.
$$
By taking the $(0,n)$ entry to both sides of this equality, we get the desired relation.

Finally, to deal with the general situation, the foregoing argument shows that, just as with the proof of Proposition~\ref{prop:4.3}, the linear maps $\lambda_n$ may be defined by piecing together linear maps defined locally on a trivialising cover for the $(E_i,D_i)$.
\end{proof}

Finally, we can state and prove our main result.

\begin{theorem}\label{thm:4.5}
Let $M, N$ be smooth manifolds and $h$ be a smooth homotopy between maps $f,g \colon M \to N$. Then there exists an $\A_{\infty}$-natural isomorphism $ \mathrm{hol}\colon f^* \Rightarrow g^*$ between the pullback functors $f^*,g^* \colon \Loc_{\infty}(N) \to \Loc_{\infty}(M).$ Such $\A_{\infty}$-natural isomorphism depends only on $h$ and is given explicitly by Chen's iterated integrals.
\end{theorem}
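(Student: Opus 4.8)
The plan is to deduce the theorem from Proposition~\ref{prop:4.4} by whiskering with the pullback functor of the homotopy. The key observation is that the smooth homotopy $h \colon M \times [0,1] \to N$ factors the two maps as $f = h \circ \iota_0$ and $g = h \circ \iota_1$, where $\iota_0,\iota_1 \colon M \to M \times [0,1]$ are the inclusions at heights $0$ and $1$. Since the pullback operation is contravariantly functorial, this yields the factorizations of DG functors
$$
f^* = \iota_0^* \circ h^*, \qquad g^* = \iota_1^* \circ h^*,
$$
where $h^* \colon \Loc_{\infty}(N) \to \Loc_{\infty}(M \times [0,1])$ is the pullback along $h$. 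It therefore suffices to transport the $\A_{\infty}$-natural isomorphism $\lambda \colon \iota_0^* \Rightarrow \iota_1^*$ supplied by Proposition~\ref{prop:4.4} along $h^*$.

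Concretely, I would construct $\mathrm{hol}$ by precomposing the components of $\lambda$ with $h^*$. For $\infty$-local systems $(E_0,D_0),\dots,(E_n,D_n)$ on $N$ and a composable chain of homogeneous morphisms $\xi_0,\dots,\xi_{n-1}$ I would set
$$
\mathrm{hol}_n(\xi_{n-1} \otimes \cdots \otimes \xi_0) = \lambda_n(h^* \xi_{n-1} \otimes \cdots \otimes h^* \xi_0),
$$
together with $\mathrm{hol}_0(E,D) = \lambda_0(h^*(E,D))$. Because $h^*$ is a strict DG functor, the maps it induces on the $\Hom$-complexes are degree-preserving chain maps, so each $\mathrm{hol}_n$ is a well-defined linear map of degree $-n$ with source $\Omega^{\sbullet}(N,\Hom(E_{n-1},E_n)) \otimes \cdots \otimes \Omega^{\sbullet}(N,\Hom(E_0,E_1))$ and target $\Hom(f^* E_0, g^* E_n) = \Hom(\iota_0^* h^* E_0, \iota_1^* h^* E_n)$.

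The heart of the verification is that the $\A_{\infty}$-naturality relations of Section~\ref{subsec:2.1} for $\mathrm{hol}$ follow from those for $\lambda$. This reduces to the fact that the extension of $h^*$ to tensor products commutes with the Hochschild-type differential $b$: since $h^*$ commutes with the internal differentials and with composition, and since it preserves total degree so that the signs appearing in $b$ are unchanged, one has $b(h^* \xi_{n-1} \otimes \cdots \otimes h^* \xi_0) = (h^* \otimes \cdots \otimes h^*)\,b(\xi_{n-1} \otimes \cdots \otimes \xi_0)$. Feeding the chain $h^* \xi_{n-1} \otimes \cdots \otimes h^* \xi_0$ into the defining relation for $\lambda$ and rewriting each $\iota_j^*(h^* \xi_i)$ as $f^* \xi_i$ or $g^* \xi_i$ then produces exactly the required relation for $\mathrm{hol}$. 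I expect this bookkeeping---matching differentials and signs on the two sides---to be the only point requiring care, and it is routine precisely because $h^*$ is strict rather than merely an $\A_{\infty}$-functor.

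Finally, I would observe that $\mathrm{hol}$ is an isomorphism: each component $\mathrm{hol}_0(E,D) = \lambda_0(h^*(E,D))$ is an isomorphism of $\infty$-local systems because $\lambda$ is an $\A_{\infty}$-natural isomorphism by Proposition~\ref{prop:4.4}. The explicit description in terms of Chen's iterated integrals, and the fact that $\mathrm{hol}$ depends only on $h$, are inherited directly from the construction of $\lambda$: in a local trivialization the pullback $h^*(E,D)$ is governed by the Maurer--Cartan element obtained by pulling back along $h$, and $\mathrm{hol}_n$ is the corresponding $(0,n)$ block of the iterated integral $\Phi$.
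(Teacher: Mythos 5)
Your proposal is correct and follows essentially the same route as the paper: factor $f^* = \iota_0^* \circ h^*$ and $g^* = \iota_1^* \circ h^*$ and transport the $\A_{\infty}$-natural isomorphism $\lambda \colon \iota_0^* \Rightarrow \iota_1^*$ of Proposition~\ref{prop:4.4} along $h^*$, the paper phrasing this as restriction to the full DG subcategory of objects $h^*(E,D)$. Your explicit verification that the strict DG functor $h^*$ commutes with the differential $b$, so that the $\A_{\infty}$-naturality relations are preserved under whiskering, is a point the paper leaves implicit, and it is carried out correctly.
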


\begin{proof}
By hypothesis, there is a smooth homotopy $h \colon M \times [0,1] \to N$ with $h(x,0)=f(x)$ and $h(x,1)=g(x)$ for all $x \in M$. So we have $h \circ \iota_0 = f$ and $h \circ \iota_1 = g$, and thus $f^* = \iota_0^* \circ h^*$ and $g^* = \iota_1^* \circ h^*$. On the other hand, by Proposition~\ref{prop:4.4}, there is an $\A_{\infty}$-natural isomorphism $\lambda \colon \iota_0^* \Rightarrow \iota_1^*$. By restricting the latter to the full DG subcategory of $\Loc_{\infty}(M \times [0,1])$ consisting of objects of the form $h^*(E,D)$ with $(E,D)$ an object of $\Loc_{\infty}(N)$, we obtain an $\A_{\infty}$-natural isomorphism between $f^*,g^* \colon \Loc_{\infty}(N) \to \Loc_{\infty}(M)$, as wished.
\end{proof}

\begin{remark}
Given a smooth manifold $M$, the usual de Rham map $\phi\colon \Omega^{\sbullet}(M) \rightarrow \mathrm{C}^{\sbullet}(M)$ from differential forms to singular cochains is not an algebra map. However, it does induce an algebra map in cohomology. This curiosity was clarified by Gugenheim \cite{Gugenheim1977}, who extended $\phi$ to an explicit $\mathsf{A}_\infty$-quasi-isomorphism given by certain iterated integrals. This map plays a crucial role in the higher Riemann-Hilbert correspondence \cite{Block-Smith2014}. As explained in  \cite{Abad-Schatz2013}, the higher dimensional holonomies that arise in the higher Riemann-Hilbert correspondence arise via push forward of a Maurer-Cartan element along an $\mathsf{A}_\infty$-morphism constructed from that of Gugenheim. In the particular case where $M$ is an interval $I$, the push forward along the $\mathsf{A}_\infty$-morphism produces the ordinary solutions to the parallel transport equations and realizes an $\mathsf{A}_\infty$-morphism $q\colon \Omega^{\bullet}(I) \rightarrow \mathrm{C}^{\sbullet}(I)$. The differential equations for parallel transport that arise in the computations above are solved explicitly by Chen's iterated integrals. The solutions can also be regarded as arising from the map $q$, which allows one to replace $\mathrm{C}^{\sbullet}(I)$ in the definition of the path algebra by the algebra of differential forms. This can be regarded as the moral reason why the $\mathsf{A}_\infty$-natural transformation defined above arises.
\end{remark}

\section{The categorified Poincar\'e lemma}\label{sec:5}
In this section we derive a series of corollaries of the results obtained in the previous section. Among other things we establish the categorified version of the Poincar\'e lemma for $\infty$-local systems.

Our first corollary is the following.

\begin{corollary}\label{cor:5.1}
If $f \colon M \to N$ is a smooth homotopy equivalence, then the pullback functor $f^{*} \colon \Loc_{\infty}(N) \to \Loc_{\infty}(M)$ is a quasi-equivalence.
\end{corollary}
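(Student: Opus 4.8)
The plan is to combine the homotopy invariance established in Theorem~\ref{thm:4.5} with the abstract criterion recorded in Lemma~\ref{lem:2.1}. Since $f$ is a smooth homotopy equivalence, I may choose a smooth homotopy inverse $f' \colon N \to M$ together with smooth homotopies $h \colon M \times [0,1] \to M$ from $f' \circ f$ to $\id_M$ and $k \colon N \times [0,1] \to N$ from $f \circ f'$ to $\id_N$. In the notation of Lemma~\ref{lem:2.1} I will take $\Ccal = \Loc_{\infty}(N)$, $\Dcal = \Loc_{\infty}(M)$, let the functor $F = f^{*}$ play the role of the map to be shown a quasi-equivalence, and propose $G = (f')^{*} \colon \Loc_{\infty}(M) \to \Loc_{\infty}(N)$ as its homotopy inverse.

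First I would record the contravariant functoriality of the pullback operation, namely that $(b \circ a)^{*} = a^{*} \circ b^{*}$ for composable smooth maps $a, b$, together with $\id^{*} = \id$. Applied to the two composites $f' \circ f$ and $f \circ f'$, this yields $f^{*} \circ (f')^{*} = (f' \circ f)^{*}$ as an endofunctor of $\Loc_{\infty}(M)$ and $(f')^{*} \circ f^{*} = (f \circ f')^{*}$ as an endofunctor of $\Loc_{\infty}(N)$.

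Next I would feed the two homotopies into Theorem~\ref{thm:4.5}. The homotopy $h$ produces an $\A_{\infty}$-natural isomorphism $(f' \circ f)^{*} \Rightarrow \id_M^{*}$, which under the identifications above reads $F \circ G = f^{*} \circ (f')^{*} \Rightarrow \id_{\Loc_{\infty}(M)} = \id_{\Dcal}$. Likewise $k$ produces $(f \circ f')^{*} \Rightarrow \id_N^{*}$, that is, $G \circ F = (f')^{*} \circ f^{*} \Rightarrow \id_{\Loc_{\infty}(N)} = \id_{\Ccal}$. These are precisely the two $\A_{\infty}$-natural isomorphisms $G \circ F \Rightarrow \id_{\Ccal}$ and $F \circ G \Rightarrow \id_{\Dcal}$ demanded by the hypotheses of Lemma~\ref{lem:2.1}, so that lemma concludes at once that $F = f^{*}$ is a quasi-equivalence.

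The only point that requires genuine care is the compatibility between the strict identities $(b \circ a)^{*} = a^{*} \circ b^{*}$ and the $\A_{\infty}$-natural isomorphisms supplied by Theorem~\ref{thm:4.5}. In the de Rham model the canonical comparison $(b \circ a)^{*} E \cong a^{*} b^{*} E$ is an isomorphism of graded bundles intertwining the pulled-back superconnections, but a priori it is a natural isomorphism rather than an equality on the nose. To be safe I would phrase the argument as \emph{composing} the $\A_{\infty}$-natural isomorphisms of Theorem~\ref{thm:4.5} with these comparison isomorphisms, which are strictly DG (hence in particular $\A_{\infty}$-) natural isomorphisms, using the composition law for $\A_{\infty}$-natural transformations recalled in \S\ref{subsec:2.1}; the composite is again an $\A_{\infty}$-natural isomorphism, so the hypotheses of Lemma~\ref{lem:2.1} hold verbatim. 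Everything else is a direct substitution, and I expect no further obstacle.
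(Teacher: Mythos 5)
Your proposal is correct and follows essentially the same route as the paper's own proof: pick a smooth homotopy inverse, apply Theorem~\ref{thm:4.5} to the two homotopies to obtain $\A_{\infty}$-natural isomorphisms $(f')^{*} \circ f^{*} \Rightarrow \id_{\Loc_{\infty}(N)}$ and $f^{*} \circ (f')^{*} \Rightarrow \id_{\Loc_{\infty}(M)}$, and conclude via Lemma~\ref{lem:2.1}. Your extra remark about the strictness of $(b \circ a)^{*} = a^{*} \circ b^{*}$ is a harmless refinement of a point the paper simply treats as a strict identity.
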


\begin{proof}
By definition, there exists a smooth map $g \colon N \to M$ such that $f \circ g$ and $g \circ f$ are smoothly homotopic to $\id_N$ and $\id_M$, respectively. Hence, by Theorem~\ref{thm:4.5}, we obtain two $\A_{\infty}$-natural isomorphisms $\lambda \colon g^* \circ f^* \Rightarrow \id_{\Loc_{\infty}(N)}$ and $\mu \colon f^* \circ g^* \Rightarrow \id_{\Loc_{\infty}(M)}$. Therefore, the desired conclusion follows by appealing to Lemma~\ref{lem:2.1}.
\end{proof}

The above has as an immediate consequence the following.

\begin{corollary}[Categorified Poincar\'e lemma]\label{cor:5.2}
If $M$ is contractible, then $\Loc_{\infty}(M)$ is quasi-equivalent to $\DGVect_{\RR}$.
\end{corollary}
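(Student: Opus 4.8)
The plan is to reduce the statement to Corollary~\ref{cor:5.1} by comparing $M$ with a single point. First I would observe that, since $M$ is contractible, there is a point $p \in M$ for which the constant map $c \colon M \to \{p\}$ is a smooth homotopy equivalence, with homotopy inverse the inclusion $\{p\} \hookrightarrow M$. (I take contractibility to mean \emph{smooth} contractibility, so that the contracting homotopy $M \times [0,1] \to M$ is smooth and Corollary~\ref{cor:5.1} applies verbatim; otherwise one first smooths the given homotopies.) Applying Corollary~\ref{cor:5.1} with $f = c$ then immediately yields that the pullback functor $c^* \colon \Loc_{\infty}(\{p\}) \to \Loc_{\infty}(M)$ is a quasi-equivalence.

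The remaining task is purely formal: to identify $\Loc_{\infty}(\{p\})$ with $\DGVect_{\RR}$ as DG categories. Over a point a $\ZZ$-graded vector bundle is simply a $\ZZ$-graded vector space $V = \bigoplus_{k} V^k$, and since $\Omega^{\sbullet}(\{p\})$ is concentrated in partial degree $0$, the decomposition $D = \ud_{\nabla} - \alpha_0 - \alpha_2 - \cdots$ collapses to $D = -\alpha_0$, a single endomorphism of degree $1$. The flatness condition $D^2 = 0$ reduces to $\alpha_0^2 = 0$, so that $(V,D)$ is nothing but a cochain complex of $\RR$-vector spaces. Likewise the morphism complex $\Omega^{\sbullet}(\{p\},\Hom(E,E'))$ reduces to $\Hom(V,V')$, and the differential $\partial_{D,D'}\omega = D' \omega - (-1)^{|\omega|}\omega D$ coincides exactly with the differential $\ud(f) = d_{V'}\circ f - (-1)^{|f|} f \circ d_{V}$ of $\DGVect_{\RR}$. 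Hence the assignment $(E,D) \mapsto (V,D)$ is an isomorphism of DG categories $\Loc_{\infty}(\{p\}) \cong \DGVect_{\RR}$ (under the finiteness conventions on graded vector bundles in force throughout), and in particular a quasi-equivalence.

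Finally I would compose: the isomorphism $\DGVect_{\RR} \cong \Loc_{\infty}(\{p\})$ followed by the quasi-equivalence $c^*$ produces a quasi-equivalence $\DGVect_{\RR} \to \Loc_{\infty}(M)$, exhibiting $\Loc_{\infty}(M)$ as quasi-equivalent to $\DGVect_{\RR}$, as desired.

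The step I expect to require the most care is the very first one. Both Theorem~\ref{thm:4.5} and Corollary~\ref{cor:5.1} are formulated for smooth maps and smooth homotopies, so the delicate point is to ensure that the contraction of $M$ can genuinely be taken to be smooth, rather than merely continuous. Once that is secured, the DG-categorical identification of $\infty$-local systems over a point with $\DGVect_{\RR}$ is a routine unwinding of the definitions in \S\ref{subsec:2.2}.
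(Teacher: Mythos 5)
Your proposal is correct and follows essentially the same route as the paper: the paper's proof likewise applies Corollary~\ref{cor:5.1} to the homotopy equivalence between $M$ and a point and then invokes the identification $\Loc_{\infty}(\{\ast\}) = \DGVect_{\RR}$, which it treats as immediate. The extra details you supply---smoothing the contracting homotopy and unwinding the definitions over a point to see $D = -\alpha_0$ with $\alpha_0^2 = 0$ and the matching morphism differentials---are exactly the points the paper leaves implicit, and you fill them in correctly.
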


\begin{proof}
Since $M$ is contractible, it has the same homotopy type as a point. Thus, according to Corollary~\ref{cor:5.1}, there is a quasi-equivalence between $\Loc_{\infty}(M)$ and $\Loc_{\infty}(\{\mathrm{\ast}\})$. Because $\Loc_{\infty}(\{\mathrm{\ast}\}) = \DGVect_{\RR}$, the desired assertion follows.
\end{proof}

It is now quite easy to see that the following result holds.

\begin{corollary}\label{cor:5.3}
On a contractible manifold $M$, every $\infty$-local system $(E,D)$ is isomorphic to a constant $\infty$-local system.
\end{corollary}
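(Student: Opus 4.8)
The plan is to obtain the isomorphism from Theorem~\ref{thm:4.5} by comparing $\id_M$ with a constant map. Since $M$ is contractible, I fix a point $p \in M$ and a smooth homotopy $h \colon M \times [0,1] \to M$ between $f = \id_M$ and the constant map $g = c \colon M \to M$, $x \mapsto p$ (smoothing the contraction if necessary). Theorem~\ref{thm:4.5} then produces an $\A_{\infty}$-natural isomorphism $\mathrm{hol} \colon \id_M^* \Rightarrow c^*$ of DG functors $\Loc_{\infty}(M) \to \Loc_{\infty}(M)$. The whole argument rests on reading off what its zeroth component does to the given object $(E,D)$; note that this is genuinely stronger than the mere quasi-equivalence of Corollary~\ref{cor:5.2}, since we need an honest isomorphism of $\infty$-local systems rather than a quasi-isomorphism.

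By the definition of an $\A_{\infty}$-natural isomorphism, $\mathrm{hol}_0(E,D)$ is a closed degree $0$ isomorphism in $\Loc_{\infty}(M)$, that is, an element of $\uZ^0 \Omega^{\sbullet}(M,\Hom(E,c^*E))$ whose underlying bundle map is invertible. As recalled in \S\ref{subsec:2.2}, a closed degree $0$ morphism in $\Loc_{\infty}(M)$ with invertible underlying map is precisely an isomorphism of $\infty$-local systems. Since $\id_M^*(E,D) = (E,D)$, this exhibits $\mathrm{hol}_0(E,D)$ as an isomorphism of $\infty$-local systems from $(E,D)$ onto $c^*(E,D)$.

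It then remains only to recognise $c^*(E,D)$ as a constant $\infty$-local system. Writing $c = \iota_p \circ \pi$ with $\pi \colon M \to \{p\}$ and $\iota_p \colon \{p\} \hookrightarrow M$, one has $c^* = \pi^* \circ \iota_p^*$; the object $\iota_p^*(E,D)$ is simply the fibre cochain complex $(E_p,(\alpha_0)_p)$, where $\alpha_0$ is the partial degree $0$ part of $D$. Applying the pullback functor along the constant map then collapses the connection $\ud_{\nabla}$ to the trivial de Rham differential $\uud$ and annihilates every $\alpha_k$ with $k \geq 2$ (these being forms of positive degree), so that $c^*(E,D)$ is the trivial bundle $M \times E_p$ equipped with the constant superconnection $\uud - (\alpha_0)_p$; this is the constant $\infty$-local system attached to $(E_p,(\alpha_0)_p)$. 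I expect this last identification to be the only delicate point: its substance is the observation that pulling $\ud_{\nabla} - \alpha_0 - \alpha_2 - \cdots$ back along $c$ sends the positive-degree forms $\alpha_k$ (for $k \geq 2$) to zero and reduces $\ud_{\nabla}$ to $\uud$, leaving exactly the constant endomorphism $(\alpha_0)_p$. Everything else is a formal consequence of Theorem~\ref{thm:4.5} together with the definition of an $\A_{\infty}$-natural isomorphism.
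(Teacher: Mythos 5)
Your proof is correct and is precisely the argument the paper leaves implicit: the paper offers no written proof of Corollary~\ref{cor:5.3}, and the intended derivation is exactly yours --- apply Theorem~\ref{thm:4.5} (equivalently, Proposition~\ref{prop:4.3} to $h^*(E,D)$ for a smooth contraction $h$) to a smooth homotopy between $\id_M$ and a constant map $c$, and use that the zeroth component of the resulting $\A_{\infty}$-natural isomorphism is an \emph{honest} isomorphism of $\infty$-local systems, your remark that Corollary~\ref{cor:5.2} alone would only yield an isomorphism in the homotopy category being exactly the right point. One small caveat, which reflects a looseness in the paper rather than a gap in your argument: your identification $c^*(E,D) \cong (M \times E_p, \uud - (\alpha_0)_p)$ is correct, and this is ``constant'' in the evidently intended sense of being pulled back from $\Loc_{\infty}(\{\ast\}) = \DGVect_{\RR}$, even though the literal definition of a constant $\infty$-local system in \S\ref{subsec:2.2} carries no fiberwise differential --- under that literal reading the corollary would fail whenever $(\alpha_0)_p \neq 0$ (the lowest-degree component of any closed invertible degree-$0$ morphism intertwines the fiber differentials), so your reading, with the complex $(E_p,(\alpha_0)_p)$ attached, is the correct one.
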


We can strengthen the previous corollary and derive a local normal form for flat superconnections. The following result confirms that locally, all flat superconnections are isomorphic to a constant one.

\begin{corollary} \label{cor:5.4}
For an arbitrary manifold $M$, any $\infty$-local system $(E,D)$ is locally isomorphic to a constant $\infty$-local system, that is, every point has an open neighbourhood $U$ in which $(E\vert_U,D\vert_U)$ is isomorphic to a constant $\infty$-local system.
\end{corollary}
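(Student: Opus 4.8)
The plan is to deduce this local statement from the global one already established on contractible manifolds, namely Corollary~\ref{cor:5.3}, by exploiting the fact that every smooth manifold is locally diffeomorphic to Euclidean space and hence locally contractible. In other words, the corollary is simply the localization of Corollary~\ref{cor:5.3} to a suitable chart.

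First I would fix an arbitrary point $p \in M$ and choose a coordinate chart around $p$. This provides an open neighbourhood $U$ of $p$ together with a diffeomorphism onto an open subset of $\RR^n$; shrinking $U$ if necessary, I may assume that $U$ is diffeomorphic to $\RR^n$ itself (or to an open ball), so that $U$ is a contractible smooth manifold in its own right.

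Next I would restrict the given $\infty$-local system $(E,D)$ to $U$. Since restriction of a $\ZZ$-graded vector bundle to an open set is again a $\ZZ$-graded vector bundle, and since both the Leibniz rule and the flatness condition $D^2 = 0$ are local, the restriction $D\vert_U$ is a flat $\ZZ$-graded superconnection on $E\vert_U$. Thus $(E\vert_U, D\vert_U)$ is a genuine $\infty$-local system on the contractible manifold $U$.

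Finally I would apply Corollary~\ref{cor:5.3} to the contractible manifold $U$: it yields an isomorphism of $\infty$-local systems between $(E\vert_U, D\vert_U)$ and a constant $\infty$-local system on $U$. As $p$ was arbitrary, this exhibits the required contractible neighbourhood at every point, completing the proof. The only point requiring any care---and the nearest thing to an obstacle---is the routine verification that $(E\vert_U, D\vert_U)$ is indeed an $\infty$-local system, together with the observation that an open subset of a manifold, equipped with the restricted smooth structure, is itself a manifold to which Corollary~\ref{cor:5.3} legitimately applies; both facts are immediate from the local nature of the defining conditions and the local Euclidean structure of $M$, so I do not expect any genuine difficulty here.
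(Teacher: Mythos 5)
Your proposal is correct and follows exactly the paper's route: the paper proves Corollary~\ref{cor:5.4} by the one-line remark that any contractible open neighbourhood $U$ works, i.e.\ by restricting $(E,D)$ to such a $U$ and invoking Corollary~\ref{cor:5.3}, which is precisely your argument spelled out in more detail. The extra care you take (checking that the restriction is again an $\infty$-local system and that $U$ is a contractible manifold in its own right) is the routine verification the paper leaves implicit.
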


Any contractible open neighbourhood $U$ will obviously work.

\begin{remark}
Some of the results of the paper apply to contexts more general than higher local systems on manifolds, for instance Block's cohesive modules \cite{Block2010} or representations up to homotopy of Lie algebroids \cite{AriasAbad-Crainic2012}. For the sake of concreteness we have decided to remain in the context of $\infty$-local systems.
\end{remark}

\appendix \label{app}
\section{Calculation of $\ud \lambda_n^{\{\alpha_v\}}(\xi_{n-1} \otimes \dots \otimes \xi_0)$}\label{app:A}
In this appendix we embark on the calculation of $\ud \lambda_n^{\{\alpha_v\}}(\xi_{n-1} \otimes \dots \otimes \xi_0)$. The notation and symbols are as in the proof of Proposition~\ref{prop:4.4}.

To begin with, since $\lambda_n^{\{\alpha_v\}}(\xi_{n-1} \otimes \dots \otimes \xi_0)$ is defined as the $(0,n)$ block entry of  iterated integral $\Phi^{\omega}(1)$ of the element $\omega = \sum_{i=0}^n \alpha_i + \sum_{i=0}^{n-1}\xi_i$, it is an infinite sum of terms of the form
$$
\int_0^{1} \int_0^{s_1} \cdots \int_0^{s_{q-1}} \iota_{s_{1}}^* \ui_{\frac{\partial}{\partial s_{1}}} \omega_1 \wedge \iota_{s_{2}}^* \ui_{\frac{\partial}{\partial s_{2}}} \omega_2 \wedge \cdots \wedge \iota_{s_{q}}^* \ui_{\frac{\partial}{\partial s_{q}}} \omega_{q} \, \ud s_{q} \cdots \ud s_2 \ud s_1,
$$
where $q \geq n$ and there is an strictly decreasing $n$-tuple $1 \leq i_{n-1} < i_{n-2} < \cdots < i_1 < i_0 \leq q$ such that $\omega_{i_{k}} = \xi_k$ for $0 \leq k \leq n-1$, $\omega_j = \alpha_n$ for $1 \leq j \leq i_{n-1}-1$, $\omega_j = \alpha_{n-k}$ for $i_{n-k}+1 \leq j \leq i_{n-k-1}-1$ with $k$ running from $1$ to $n-1$, and $\omega_j = \alpha_0$ for $i_0 + 1 \leq j \leq q$.
We want to compute the exterior derivative of each of these terms. For this purpose, by applying an argument similar to that presented in the proof of Proposition~\ref{prop:3.4}, we rewrite the latter as
$$
(-1)^{\sum_{j=1}^{q-1}(q-j)\vert \omega_j\vert}  \int_{\Delta_q} \pi_1^* \omega_1 \wedge \pi_2^*\omega_2 \wedge \cdots \wedge \pi_q^* \omega_q.
$$
Next, we observe that, by virtue of Lemma \ref{lem:3.5}, we have
\begin{align*}
\ud \int_{\Delta_{p}} & \, \pi_{1}^* \omega_1 \wedge \cdots \wedge \pi_{q}^* \omega_q \\
=&  \sum_{i=1}^{q} (-1)^{q + \sum_{j=1}^{i-1}\vert \omega_j \vert} \int_{\Delta_{q}} \pi_{1}^* \omega_1  \wedge \cdots \wedge \pi_{i-1}^* \omega_{i-1} \wedge  \pi_{i}^*\ud \omega_{i}  \wedge \pi_{i+1}^* \omega_{i+1} \wedge \cdots \wedge \pi_{q}^* \omega_q \\
&+  \sum_{i=1}^{q-1}(-1)^{i} \int_{\Delta_{q-1}} \pi_{1}^* \omega_1  \wedge \cdots \wedge \pi_{i-1}^* \omega_{i-1} \wedge  \pi_{i}^* (\omega_{i} \wedge \omega_{i+1})  \wedge \pi_{i+1}^* \omega_{i+2} \wedge \cdots \wedge \pi_{q-1}^* \omega_q \\
&+  (-1)^{q} \left( \int_{\Delta_{q-1}} \pi_{1}^* \omega_1  \wedge \cdots \wedge \pi_{q-1}^* \omega_{q-1} \right) \wedge \iota_0^* \omega_q \\
&+  (-1)^{(q-1) \vert \omega_1\vert} \iota_1^{*} \omega_1 \wedge \left( \int_{\Delta_{q-1}} \pi_{1}^* \omega_2  \wedge \cdots \wedge \pi_{q-1}^* \omega_{q} \right).
\end{align*}
Therefore, by the foregoing, we find that
\begin{align*}
&\ud \int_0^{1} \int_0^{s_1} \cdots \int_0^{s_{q-1}} \iota_{s_{1}}^* \ui_{\frac{\partial}{\partial s_{1}}} \omega_1 \wedge \iota_{s_{2}}^* \ui_{\frac{\partial}{\partial s_{2}}} \omega_2 \wedge \cdots \wedge \iota_{s_{q}}^* \ui_{\frac{\partial}{\partial s_{q}}} \omega_{q} \, \ud s_{q} \cdots \ud s_2 \ud s_1 \\
&\quad = \sum_{i=1}^{q}(-1)^{\sum_{j=1}^{i-1}\vert \omega_j\vert - i} \int_0^{1} \int_0^{s_1} \cdots \int_0^{s_{q-1}} \iota_{s_{1}}^* \ui_{\frac{\partial}{\partial s_{1}}} \omega_1 \wedge \iota_{s_{2}}^* \ui_{\frac{\partial}{\partial s_{2}}} \omega_2 \wedge \cdots \wedge \iota_{s_{i-1}}^* \ui_{\frac{\partial}{\partial s_{i-1}}} \omega_{i-1}    \\
&\qquad\qquad\qquad\qquad\qquad\qquad\wedge  \iota_{s_{i}}^* \ui_{\frac{\partial}{\partial s_{i}}}\ud \omega_{i} \wedge \iota_{s_{i+1}}^* \ui_{\frac{\partial}{\partial s_{i+1}}} \omega_{i+1} \wedge \cdots \wedge \iota_{s_{q}}^* \ui_{\frac{\partial}{\partial s_{q}}} \omega_{q} \, \ud s_{q} \cdots \ud s_2 \ud s_1 \\
&\phantom{\quad =}\,\, + \sum_{i=1}^{q-1}(-1)^{\sum_{j=1}^{i}\vert \omega_j\vert +i}  \int_0^{1} \int_0^{s_1} \cdots \int_0^{s_{q-2}} \iota_{s_{1}}^* \ui_{\frac{\partial}{\partial s_{1}}} \omega_1 \wedge \iota_{s_{2}}^* \ui_{\frac{\partial}{\partial s_{2}}} \omega_2 \wedge \cdots \wedge \iota_{s_{i-1}}^* \ui_{\frac{\partial}{\partial s_{i-1}}} \omega_{i-1}    \\
&\qquad\qquad\qquad\qquad\wedge  \iota_{s_{i}}^* \ui_{\frac{\partial}{\partial s_{i}}}(\omega_i \wedge \omega_{i+1}) \wedge \iota_{s_{i+1}}^* \ui_{\frac{\partial}{\partial s_{i+1}}} \omega_{i+2} \wedge \cdots \wedge \iota_{s_{q-1}}^* \ui_{\frac{\partial}{\partial s_{q-1}}} \omega_{q} \, \ud s_{q-1} \cdots \ud s_2 \ud s_1  \\
&\phantom{\quad =}\,\, + (-1)^{\sum_{j=1}^{q-1}\vert \omega_j\vert + q} \Bigg(\int_0^{1} \int_0^{s_1} \cdots \int_0^{s_{q-2}}  \iota_{s_{1}}^* \ui_{\frac{\partial}{\partial s_{1}}} \omega_1 \wedge \iota_{s_{2}}^* \ui_{\frac{\partial}{\partial s_{2}}} \omega_2 \\
&\qquad\qquad\qquad \qquad\qquad\qquad \qquad\qquad\qquad  \wedge \cdots \wedge \iota_{s_{q-1}}^* \ui_{\frac{\partial}{\partial s_{q-1}}} \omega_{q-1} \, \ud s_{q-1} \cdots \ud s_2 \ud s_1\Bigg) \wedge \iota_0^* \omega_q \\
&\phantom{\quad =}\,\,+ \iota_1^* \omega_1 \wedge \Bigg( \int_0^{1} \int_0^{s_1} \cdots \int_0^{s_{q-2}} \iota_{s_{1}}^* \ui_{\frac{\partial}{\partial s_{1}}} \omega_2 \wedge \iota_{s_{2}}^* \ui_{\frac{\partial}{\partial s_{2}}} \omega_3  \wedge \cdots \wedge \iota_{s_{q-1}}^* \ui_{\frac{\partial}{\partial s_{q-1}}} \omega_{q} \, \ud s_{q-1} \cdots \ud s_2 \ud s_1 \Bigg).
\end{align*}
Let us analyse each of the terms on the right hand side of this relation separately.

Consider the term inside the first sum. We distinguish two cases. First, assume that $\omega_i = \alpha_i$. Then the sign in front is
$$
(-1)^{\sum_{j=1}^{i-1}\vert \omega_j\vert - i} = (-1)^{\sum_{j=i}^{n-1}\vert \xi_j\vert -n +  i -1},
$$
and so the term reduces to
\begin{align}\label{eqn:A.1}
\begin{split}
&(-1)^{\sum_{j=i}^{n-1}\vert \xi_j\vert -n +  i - 1} \int_0^{1} \int_0^{s_1} \cdots \int_0^{s_{q-1}} \iota_{s_{1}}^* \ui_{\frac{\partial}{\partial s_{1}}} \omega_1 \wedge \iota_{s_{2}}^* \ui_{\frac{\partial}{\partial s_{2}}} \omega_2 \wedge \cdots \wedge \iota_{s_{i-1}}^* \ui_{\frac{\partial}{\partial s_{i-1}}} \omega_{i-1}    \\
&\qquad\qquad\qquad\qquad\qquad\wedge  \iota_{s_{i}}^* \ui_{\frac{\partial}{\partial s_{i}}}\ud \alpha_{i} \wedge \iota_{s_{i+1}}^* \ui_{\frac{\partial}{\partial s_{i+1}}} \omega_{i+1} \wedge \cdots \wedge \iota_{s_{q}}^* \ui_{\frac{\partial}{\partial s_{q}}} \omega_{q} \, \ud s_{q} \cdots \ud s_2 \ud s_1.
\end{split}
\end{align}
Second, suppose that $\omega_i = \xi_i$. Then the sign turns out to be
$$
(-1)^{\sum_{j=1}^{i-1}\vert \omega_j\vert - i} = (-1)^{\sum_{j=i+1}^{n-1}\vert \xi_j\vert -n +  i },
$$
and consequently the term becomes
\begin{align*}
&(-1)^{\sum_{j=i+1}^{n-1}\vert \xi_j\vert -n +  i } \int_0^{1} \int_0^{s_1} \cdots \int_0^{s_{q-1}} \iota_{s_{1}}^* \ui_{\frac{\partial}{\partial s_{1}}} \omega_1 \wedge \iota_{s_{2}}^* \ui_{\frac{\partial}{\partial s_{2}}} \omega_2 \wedge \cdots \wedge \iota_{s_{i-1}}^* \ui_{\frac{\partial}{\partial s_{i-1}}} \omega_{i-1}    \\
&\qquad\qquad\qquad\qquad\qquad\qquad\wedge  \iota_{s_{i}}^* \ui_{\frac{\partial}{\partial s_{i}}}\ud \xi_{i} \wedge \iota_{s_{i+1}}^* \ui_{\frac{\partial}{\partial s_{i+1}}} \omega_{i+1} \wedge \cdots \wedge \iota_{s_{q}}^* \ui_{\frac{\partial}{\partial s_{q}}} \omega_{q} \, \ud s_{q} \cdots \ud s_2 \ud s_1.
\end{align*}
If we add these over all $q \geq n$ and all $1 \leq i_{n-1} < i_{n-2} < \cdots < i_1 < i_0 \leq q$, we get
\begin{equation}\label{eqn:A.2}
(-1)^{\sum_{j=i+1}^{n-1}\vert \xi_j\vert -n +  i } \lambda_n^{\{\alpha_v\}}(\xi_{n-1} \otimes \cdots \otimes \xi_{i+1} \otimes \ud \xi_i \otimes \xi_{i-1} \otimes \cdots \otimes \xi_0).
\end{equation}

We now turn to the term inside the second sum. We have four cases to consider. In the first case, $\omega_i = \omega_{i+1} = \alpha_i$. Then the sign in front is
$$
(-1)^{\sum_{j=1}^{i}\vert \omega_j\vert +i} = (-1)^{\sum_{j=i}^{n-1}\vert \xi_j\vert -n + i},
$$
and thus the term becomes
\begin{align}\label{eqn:A.3}
\begin{split}
&(-1)^{\sum_{j=i}^{n-1}\vert \xi_j\vert -n + i}  \int_0^{1} \int_0^{s_1} \cdots \int_0^{s_{q-2}} \iota_{s_{1}}^* \ui_{\frac{\partial}{\partial s_{1}}} \omega_1 \wedge \iota_{s_{2}}^* \ui_{\frac{\partial}{\partial s_{2}}} \omega_2 \wedge \cdots \wedge \iota_{s_{i-1}}^* \ui_{\frac{\partial}{\partial s_{i-1}}} \omega_{i-1}    \\
&\qquad\qquad\qquad\wedge  \iota_{s_{i}}^* \ui_{\frac{\partial}{\partial s_{i}}}(\alpha_i \wedge \alpha_{i+1}) \wedge \iota_{s_{i+1}}^* \ui_{\frac{\partial}{\partial s_{i+1}}} \omega_{i+2} \wedge \cdots \wedge \iota_{s_{q-1}}^* \ui_{\frac{\partial}{\partial s_{q-1}}} \omega_{q} \, \ud s_{q-1} \cdots \ud s_2 \ud s_1.
\end{split}
\end{align}
In the second case, $\omega_i = \alpha_{i+1}$ and $\omega_i = \xi_i$. Then the sign comes out to be
$$
(-1)^{\sum_{j=1}^{i}\vert \omega_j\vert +i} = (-1)^{\sum_{j=i+1}^{n-1}\vert \xi_j\vert -n + i - 1},
$$
and as a result the term becomes
\begin{align*}
&(-1)^{\sum_{j=i+1}^{n-1}\vert \xi_j\vert -n + i - 1} \int_0^{1} \int_0^{s_1} \cdots \int_0^{s_{q-2}} \iota_{s_{1}}^* \ui_{\frac{\partial}{\partial s_{1}}} \omega_1 \wedge \iota_{s_{2}}^* \ui_{\frac{\partial}{\partial s_{2}}} \omega_2 \wedge \cdots \wedge \iota_{s_{i-1}}^* \ui_{\frac{\partial}{\partial s_{i-1}}} \omega_{i-1}    \\
&\qquad\qquad\qquad\wedge  \iota_{s_{i}}^* \ui_{\frac{\partial}{\partial s_{i}}}(\alpha_{i+1} \wedge \xi_{i}) \wedge \iota_{s_{i+1}}^* \ui_{\frac{\partial}{\partial s_{i+1}}} \omega_{i+2} \wedge \cdots \wedge \iota_{s_{q-1}}^* \ui_{\frac{\partial}{\partial s_{q-1}}} \omega_{q} \, \ud s_{q-1} \cdots \ud s_2 \ud s_1.
\end{align*}
Adding these over all $q \geq n$ and all $1 \leq i_{n-1} < i_{n-2} < \cdots < i_1 < i_0 \leq q$, we obtain
\begin{equation}\label{eqn:A.4}
(-1)^{\sum_{j=i+1}^{n-1}\vert \xi_j\vert -n + i - 1} \lambda_n^{\{\alpha_v\}}(\xi_{n-1} \otimes \cdots \otimes \xi_{i+1} \otimes (\alpha_{i+1} \wedge \xi_i) \otimes \xi_{i-1} \otimes \cdots \otimes \xi_0).
\end{equation}
For the third case, we take $\omega_i = \xi_i$ and $\omega_{i+1}=\alpha_i$. Then the sign in front is
$$
(-1)^{\sum_{j=1}^{i}\vert \omega_j\vert +i} = (-1)^{\sum_{j=i}^{n-1}\vert \xi_j\vert -n + i},
$$
and hence the term becomes
\begin{align*}
&(-1)^{\sum_{j=i}^{n-1}\vert \xi_j\vert -n + i} \int_0^{1} \int_0^{s_1} \cdots \int_0^{s_{q-2}} \iota_{s_{1}}^* \ui_{\frac{\partial}{\partial s_{1}}} \omega_1 \wedge \iota_{s_{2}}^* \ui_{\frac{\partial}{\partial s_{2}}} \omega_2 \wedge \cdots \wedge \iota_{s_{i-1}}^* \ui_{\frac{\partial}{\partial s_{i-1}}} \omega_{i-1}    \\
&\qquad\qquad\qquad\wedge  \iota_{s_{i}}^* \ui_{\frac{\partial}{\partial s_{i}}}(\xi_{i} \wedge \alpha_{i}) \wedge \iota_{s_{i+1}}^* \ui_{\frac{\partial}{\partial s_{i+1}}} \omega_{i+2} \wedge \cdots \wedge \iota_{s_{q-1}}^* \ui_{\frac{\partial}{\partial s_{q-1}}} \omega_{q} \, \ud s_{q-1} \cdots \ud s_2 \ud s_1.
\end{align*}
Once again, adding over all $q \geq n$ and all $1 \leq i_{n-1} < i_{n-2} < \cdots < i_1 < i_0 \leq q$, this yields
\begin{equation}\label{eqn:A.5}
(-1)^{\sum_{j=i}^{n-1}\vert \xi_j\vert -n + i} \lambda_n^{\{\alpha_v\}}(\xi_{n-1} \otimes \cdots \otimes \xi_{i+1} \otimes (\xi_{i} \wedge \alpha_{i}) \otimes \xi_{i-1} \otimes \cdots \otimes \xi_0).
\end{equation}
In the fourth and last case, $\omega_i = \xi_i$ and $\omega_{i+1} = \xi_{i-1}$. Then the sign results in
$$
(-1)^{\sum_{j=1}^{i}\vert \omega_j\vert +i} = (-1)^{\sum_{j=i}^{n-1}\vert \xi_j\vert -n + i},
$$
and the term becomes
\begin{align*}
&(-1)^{\sum_{j=i}^{n-1}\vert \xi_j\vert -n + i} \int_0^{1} \int_0^{s_1} \cdots \int_0^{s_{q-2}} \iota_{s_{1}}^* \ui_{\frac{\partial}{\partial s_{1}}} \omega_1 \wedge \iota_{s_{2}}^* \ui_{\frac{\partial}{\partial s_{2}}} \omega_2 \wedge \cdots \wedge \iota_{s_{i-1}}^* \ui_{\frac{\partial}{\partial s_{i-1}}} \omega_{i-1}    \\
&\qquad\qquad\qquad\wedge  \iota_{s_{i}}^* \ui_{\frac{\partial}{\partial s_{i}}}(\xi_{i} \wedge \xi_{i-1}) \wedge \iota_{s_{i+1}}^* \ui_{\frac{\partial}{\partial s_{i+1}}} \omega_{i+2} \wedge \cdots \wedge \iota_{s_{q-1}}^* \ui_{\frac{\partial}{\partial s_{q-1}}} \omega_{q} \, \ud s_{q-1} \cdots \ud s_2 \ud s_1.
\end{align*}
Thus adding over all $q \geq n$ and all $1 \leq i_{n-1} < i_{n-2} < \cdots < i_1 < i_0 \leq q$, we obtain
\begin{equation}\label{eqn:A.6}
(-1)^{\sum_{j=i}^{n-1}\vert \xi_j\vert -n + i} \lambda_n^{\{\alpha_v\}}(\xi_{n-1} \otimes \cdots \otimes \xi_{i+1} \otimes (\xi_{i} \wedge \xi_{i-1}) \otimes \xi_{i-2} \otimes \cdots \otimes \xi_0).
\end{equation}

Finally, let us consider the two remaining terms. Here we identify two cases. First, assume that $\omega_1 = \alpha_n$ and $\omega_q = \alpha_0$. Then the sign in front of the first summand is
$$
(-1)^{\sum_{j=1}^{q-1}\vert \omega_j \vert + q} = (-1)^{\sum_{j=0}^{n-1}\vert \xi_j \vert - n + 1},
$$
and therefore these two terms become
\begin{align*}
& (-1)^{\sum_{j=0}^{n-1}\vert \xi_j \vert - n + 1} \Bigg(\int_0^{1} \int_0^{s_1} \cdots \int_0^{s_{q-2}}  \iota_{s_{1}}^* \ui_{\frac{\partial}{\partial s_{1}}} \omega_1 \wedge \iota_{s_{2}}^* \ui_{\frac{\partial}{\partial s_{2}}} \omega_2 \\
&\qquad\qquad\qquad \qquad\qquad\qquad \qquad\qquad\qquad  \wedge \cdots \wedge \iota_{s_{q-1}}^* \ui_{\frac{\partial}{\partial s_{q-1}}} \omega_{q-1} \, \ud s_{q-1} \cdots \ud s_2 \ud s_1\Bigg) \wedge \iota_0^* \alpha_0 \\
&+ \iota_1^* \alpha_n \wedge \Bigg( \int_0^{1} \int_0^{s_1} \cdots \int_0^{s_{q-2}} \iota_{s_{1}}^* \ui_{\frac{\partial}{\partial s_{1}}} \omega_2 \wedge \iota_{s_{2}}^* \ui_{\frac{\partial}{\partial s_{2}}} \omega_3  \wedge \cdots \wedge \iota_{s_{q-1}}^* \ui_{\frac{\partial}{\partial s_{q-1}}} \omega_{q} \, \ud s_{q-1} \cdots \ud s_2 \ud s_1 \Bigg).
\end{align*}
Thus if we add these over all $q \geq n$ and all $1 \leq i_{n-1} < i_{n-2} < \cdots < i_1 < i_0 \leq q$, we get
\begin{align}\label{eqn:A.7}
\begin{split}
\iota_1^* \alpha_n \wedge \lambda_n^{\{\alpha_v\}} (\xi_{n-1} \otimes \cdots \otimes \xi_0) + (-1)^{\sum_{j=0}^{n-1}\vert \xi_j \vert - n + 1} \lambda_n^{\{\alpha_v\}} (\xi_{n-1} \otimes \cdots \otimes \xi_0) \wedge \iota_0^* \alpha_0.
\end{split}
\end{align}
Second, suppose that $\omega_1 = \xi_{n-1}$ and $\omega_q = \xi_0$. Then the relevant sign is
$$
(-1)^{\sum_{j=1}^{q-1}\vert \omega_j \vert + q} = (-1)^{\sum_{j=1}^{n-1}\vert \xi_j \vert - n},
$$
and so these two terms become
\begin{align*}
& (-1)^{\sum_{j=1}^{n-1}\vert \xi_j \vert - n} \Bigg(\int_0^{1} \int_0^{s_1} \cdots \int_0^{s_{q-2}}  \iota_{s_{1}}^* \ui_{\frac{\partial}{\partial s_{1}}} \omega_1 \wedge \iota_{s_{2}}^* \ui_{\frac{\partial}{\partial s_{2}}} \omega_2 \\
&\qquad\qquad\qquad \qquad\qquad\qquad \qquad\qquad\qquad  \wedge \cdots \wedge \iota_{s_{q-1}}^* \ui_{\frac{\partial}{\partial s_{q-1}}} \omega_{q-1} \, \ud s_{q-1} \cdots \ud s_2 \ud s_1\Bigg) \wedge \iota_0^* \xi_0 \\
&+ \iota_1^* \xi_{n-1} \wedge \Bigg( \int_0^{1} \int_0^{s_1} \cdots \int_0^{s_{q-2}} \iota_{s_{1}}^* \ui_{\frac{\partial}{\partial s_{1}}} \omega_2 \wedge \iota_{s_{2}}^* \ui_{\frac{\partial}{\partial s_{2}}} \omega_3  \wedge \cdots \wedge \iota_{s_{q-1}}^* \ui_{\frac{\partial}{\partial s_{q-1}}} \omega_{q} \, \ud s_{q-1} \cdots \ud s_2 \ud s_1 \Bigg).
\end{align*}
Adding these over all $q \geq n$ and all $1 \leq i_{n-1} < i_{n-2} < \cdots < i_1 < i_0 \leq q$, yields
\begin{align}\label{eqn:A.8}
\begin{split}
\iota_1^* \xi_{n-1} \wedge \lambda_{n-1}^{\{\alpha_v\}} (\xi_{n-2} \otimes \cdots \otimes \xi_0) + (-1)^{\sum_{j=1}^{n-1}\vert \xi_j \vert - n } \lambda_{n-1}^{\{\alpha_v\}} (\xi_{n-1} \otimes \cdots \otimes \xi_1) \wedge \iota_0^* \xi_0.
\end{split}
\end{align}

Having paved the way, we are at last in a position to explicitly give the full expression for $\ud \lambda_n^{\{\alpha_v\}}(\xi_{n-1} \otimes \dots \otimes \xi_0)$. The first thing to notice is that, owing to the Maurer-Cartan equation satisfied by each $\alpha_i$, when we add over all $q \geq n$ and all $1 \leq i_{n-1} < i_{n-2} < \cdots < i_1 < i_0 \leq q$, the terms associated with the contributions \eqref{eqn:A.1} and \eqref{eqn:A.3} cancel out. Next, observe that, thanks to the definition of the differential $b$, if we add together the contributions \eqref{eqn:A.2}, \eqref{eqn:A.4}, \eqref{eqn:A.5} and \eqref{eqn:A.6}, the result of exactly $- \lambda^{\{\alpha_v\}} \left(b(\xi_{n-1} \otimes \cdots \otimes \xi_0) \right)$. Taking into account the two remaining contributions \eqref{eqn:A.7} and \eqref{eqn:A.8}, we thus obtain
\begin{align*}
&\ud \lambda_n^{\{\alpha_v\}}(\xi_{n-1} \otimes \dots \otimes \xi_0) \\
 &\quad =  \iota_1^* \alpha_n \wedge \lambda_n^{\{\alpha_v\}}(\xi_{n-1} \otimes \dots \otimes \xi_0) - (-1)^{\sum_{j=0}^{n-1} \vert \xi_j \vert - n}  \lambda_n^{\{\alpha_v\}}(\xi_{n-1} \otimes \dots \otimes \xi_0) \wedge \iota_0^* \alpha_0 \\
 &\quad \phantom{=}\,  + \iota_1^* \xi_{n-1} \wedge \lambda_{n-1}^{\{\alpha_v\}} (\xi_{n-2} \otimes \cdots \otimes \xi_{0}) - (-1)^{\sum_{j=1}^{n-1}\vert \xi_j \vert - n +1} \lambda_{n-1}^{\{\alpha_v\}} (\xi_{n-1} \otimes  \cdots \otimes \xi_{1}) \wedge \iota_0^*\xi_0 \\
&\quad \phantom{=}\, -\lambda^{\{\alpha_v\}} ( b (\xi_{n-1} \otimes \cdots \otimes \xi_0)),
\end{align*}
which is the relation we wanted to verify.

\section{$\A_{\infty}$-categories, $\A_{\infty}$-functors and $\A_{\infty}$-natural transformations}\label{app:B}
In this appendix, we review the basic notions of the theory of $\A_{\infty}$-categories. A full treatment of the subject can be found in \cite{Seidel2008}.

An \emph{non-unital} $\A_{\infty}$-\emph{category} $\Ccal$ over a field $K$ consists of a set of objects $\Ob \Ccal$, a $\ZZ$-graded $K$-vector space $\Hom_{\Ccal}(X,Y)$ for any pair of objects $X,Y \in \Ob \Ccal$, and composition maps of degree~$2$
$$
m_n^{\Ccal} \colon s\!\Hom_{\Ccal} (X_{n-1},X_n) \otimes_K \cdots \otimes_K s\!\Hom_{\Ccal} (X_{0},X_1) \to \Hom_{\Ccal} (X_{0},X_n)
$$
for every collection $X_0,\dots, X_n \in \Ob \Ccal$, such that for all chains of homogeneous elements $a_1 \in s\!\Hom_{\Ccal} (X_{0},X_1), \dots, a_{n} \in s\!\Hom_{\Ccal} (X_{n-1},X_n)$ the $\A_{\infty}$-\emph{associativity equation}
$$
\sum_{i=1}^n\sum_{j=0}^{n-i} (-1)^{\sum_{k=1}^{j} \vert a_k \vert- j} m_{n-i+1}^{\Ccal}(a_{n}\otimes \cdots \otimes a_{i+j+1} \otimes m_{i}^{\Ccal}(a_{i+j}\otimes\cdots\otimes a_{j+1})\otimes a_{j} \otimes \cdots \otimes a_1) =0
$$
is satisfied for any $n \geq 1$.

The first two $\A_{\infty}$-associativity equations say that $m_1^{\Ccal}$ squares to zero and is a derivation with respect to the composition on $\Ccal$ defined via $m_2^{\Ccal}$. One may hence consider the associated \emph{homotopy category} $\mathbf{Ho}(\Ccal)$, with the same objects as $\Ccal$, morphisms spaces the $0$th cohomology group $\mathrm{H}^{0}(\Hom_{\Ccal}(X,Y),m_1^{\Ccal})$, and composition given by
$$
[b] \circ [a] = (-1)^{\vert b \vert} [m_2^{\Ccal}(a \otimes b)].
$$
Along the same lines, in the case that all higher compositions vanish, the third $\A_{\infty}$-associativity equation simply says that $m_2^{\Ccal}$ is associative. We thus find that a non-unital DG category is a special case of a non-unital $\A_{\infty}$-category $\Ccal$
with $m_n^{\Ccal}= 0$ for all $n > 2$.

If $\Ccal$ and $\Dcal$ are non-unital $\A_{\infty}$-categories, a \emph{non-unital $\A_{\infty}$-functor} $F \colon \Ccal  \to \Dcal$ consists of a map $F_0 \colon \Ob \Ccal \to \Ob \Dcal$ and $K$-linear maps of degree $1$
$$
F_n \colon s\!\Hom_{\Ccal}(X_{n-1},X_{n}) \otimes_K \cdots \otimes_K s\!\Hom_{\Ccal}(X_0,X_1) \to \Hom_{\Dcal}(F(X_0),F(X_n))
$$
for every collection $X_0 ,\dots,X_n \in \Ob \Ccal$, such that for all chains of homogeneous elements $a_1 \in s\!\Hom_{\Ccal} (X_{0},X_1), \dots, a_{n} \in s\!\Hom_{\Ccal} (X_{n-1},X_n)$ the polynomial equation
\begin{align*}
\sum_{i=1}^n\sum_{j=0}^{n-i} (-1)^{\sum_{k=1}^{j} \vert a_k \vert- j}  & F_{n-i+1}(a_{n}\otimes \cdots \otimes a_{i+j+1} \otimes m_{i}^{\Ccal}(a_{i+j}\otimes\cdots\otimes a_{j+1})\otimes a_{j} \otimes \cdots \otimes a_1)
 \\
& \!\!\!\!\!\!\!\!\!\!\!\!= \sum_{r \geq 1} \sum_{s_1+\cdots+s_r = n} m_r^{\Dcal} (F_{s_r} (a_{n}\otimes \cdots \otimes a_{n-s_r+1}) \otimes \cdots \otimes F_{s_1}(a_{s_1} \otimes \cdots \otimes a_1))
\end{align*}
is satisfied for any $n \geq 1$. If $\Ccal$, $\Dcal$ and $\Ecal$ are three non-unital $\A_{\infty}$-categories, and $F \colon \Ccal \to \Dcal$ and $G \colon \Dcal \to \Ecal$ are two non-unital $\A_{\infty}$-functors, then $F$ and $G$ can be composed as follows:
$$
(G \circ F)_n (a_{n} \otimes \cdots \otimes a_1) = \sum_{r \geq 1} \sum_{s_1+\cdots+s_r = n} G_r (F_{s_r}(a_n \otimes\cdots \otimes a_{n-s_r +1} ) \otimes \cdots \otimes F_{s_1}(a_{s_1} \otimes \cdots \otimes a_1)).
$$
Composition is strictly associative, and the identity functor is a neutral element. Any non-unital $\A_{\infty}$-functor $F \colon \Ccal \to \Dcal$ induces an ordinary non-unital functor
$$
\mathbf{Ho}(F) \colon \mathbf{Ho}(\Ccal) \to \mathbf{Ho}(\Dcal)
$$
between the corresponding homotopy categories, acting in the same way on objects and on morphisms by $\mathbf{Ho}(F)([a]) = (-1)^{\vert a \vert} [F_1(a)]$.

Non-unital $\A_{\infty}$-functors between two non-unital $\A_{\infty}$-categories $\Ccal$ and $\Dcal$ can be naturally organized into a non-unital $\A_{\infty}$-category $\EuScript{P} = \AFun(\Ccal,\Dcal)$. An element $\lambda \in \Hom_{\EuScript{P} }^d(F,G)$ of the $\ZZ$-graded $K$-vector space in this $\A_{\infty}$-category is a family of $K$-linear maps of degree $d$
$$
\lambda_n \colon s\!\Hom_{\Ccal}(X_{n-1},X_n) \otimes_K \cdots \otimes_K s\! \Hom_{\Ccal}(X_0,X_1) \to \Hom_{\Dcal}(F(X_0),G(X_n))
$$
for all $X_0,\dots,X_n \in \Ob \Ccal$. In particular, $\lambda_0$ is a family of elements in $\Hom_{\Dcal}^d(F(X),G(X))$ for each object $X \in \Ob \Ccal$. We call such $\lambda$ a $\A_{\infty}$-\emph{pre-natural transformation} of degree $d$ from $F$ to $G$. The derivation $m_1^{\EuScript{P}}$ is given by
\begin{gather*}
\begin{align*}
[m_1^{\EuScript{P}}&(\lambda)]_n (a_n \otimes \cdots \otimes a_1)  = \sum_{r \geq 1} \sum_{i=1}^{r} \sum_{s_1 + \cdots + s_r = n} (-1)^{(d-1)(\sum_{k=1}^{s_1+\cdots+s_{i-1}}\vert a_k \vert - \sum_{k=1}^{i-1} s_k)} \\
&  \times m_r^{\Dcal} (  G_{s_r}(a_n \otimes \cdots \otimes a_{n-s_r + 1}) \otimes \cdots \otimes G_{s_{i+1}} (a_{s_1+\cdots + s_{i+1}} \otimes \cdots \otimes a_{s_1+\cdots + s_{i}+1}) \\
& \qquad\quad \otimes \lambda_{s_i} (a_{s_1+\cdots + s_{i}} \otimes \cdots \otimes a_{s_1+\cdots + s_{i-1}+1}) \\
& \qquad\quad  \otimes F_{s_{i-1}}(a_{s_1 + \cdots s_{i-1}} \otimes \cdots \otimes a_{s_1+\cdots + s_{i-2}+1}) \otimes \cdots \otimes F_{s_1}(a_{s_1} \otimes \cdots \otimes a_1)) \\
&- \sum_{i=1}^n\sum_{j=0}^{n-i} (-1)^{\sum_{k=1}^{n-1} \vert a_k \vert- j+d -1}\lambda_{d-i+1}(a_n \otimes \cdots \otimes a_{i+j+1} \otimes m_i^{\Ccal}(a_{i+j} \otimes \cdots \otimes a_{j+1}) \otimes a_j \otimes \cdots \otimes a_1).
\end{align*}
\end{gather*}
The formulae for the $m_n^{\EuScript{P}}$ with $n \geq 2$ follow a much simpler pattern and can be consulted in \cite{Seidel2008}.

With the above understood, an $\A_{\infty}$-\emph{natural transformation} between two non-unital $\A_{\infty}$-functors $F \colon \Ccal \to \Dcal$ and $G \colon \Ccal \to \Dcal$ is a closed $\A_{\infty}$-pre-natural transformation $\lambda \in \Hom_{\EuScript{P}}(F,G)$ of degree $0$. We simply write $\lambda \colon F \Rightarrow G$ to indicate that we have such a transformation.

Given an $\A_{\infty}$-natural transformation $\lambda \colon F  \Rightarrow G$ between non-unital $\A_{\infty}$-functors $F, G \colon \Ccal \to \Dcal$, consider the elements $[\lambda_0(X)] \in \Hom_{\mathbf{Ho}(\Dcal)}(F(),G(X))$ for each $X \in \Ob \Ccal$. These satisfy the natural condition
$$
[\lambda_0(Y)] \circ [F_1(a)] = [G_1(a)] \circ [\lambda_0(X)]
$$
for all $[a] \in \Hom_{\mathbf{Ho}(\Ccal)}(X,Y)$. Hence, they constitute a natural transformation, which we may denote by $\mathbf{Ho}(\lambda)$, between the ordinary non-unital functors $\mathbf{Ho}(F)$ and $\mathbf{Ho}(G)$.

To close we would like to clarify the connection between the previous definition and the one given in Section~\ref{subsec:2.1}. Suppose that $\Ccal$ and $\Dcal$ are non-unital $\A_{\infty}$-categories with $m_n^{\Ccal} = 0$ and $m_n^{\Dcal}=0$ for all $n > 2$. Thus, as already remarked, both $\Ccal$ and $\Dcal$ are non-unital DG categories, where the differential and composition in $\Ccal$ are given by
$d a = (-1)^{\vert a \vert} m_1^{\Ccal}(a)$, and $b \circ a = (-1)^{\vert a \vert}m_2^{\Ccal}(a,b)$, respectively, and similarly for $\Dcal$. If $F \colon \Ccal \to \Dcal$ and $G \colon \Ccal \to \Dcal$ are two non-unital DG functors, then the closeness condition for an $\A_{\infty}$-natural transformation $\lambda \colon F  \Rightarrow G$ is equivalent to the defining relation presented in Section~\ref{subsec:2.1}, with the replacements $f_0 = a_1, \dots, f_{n-1} = a_n$.

\begin{remark}
As suggested by the referee, in the latter context, the definition of an $\A_{\infty}$-natural transformation can be described in terms of the path algebra. For simplicity, we discuss the case of DG algebras. So let us assume that $\Ccal$ and $\Dcal$ are DG algebras and let $F \colon \Ccal \to \Dcal$ and $G \colon \Ccal \to \Dcal$ be DG maps. If we denote by $\Lambda$ the DG algebra of cellular cochains on the interval $I=[0,1]$, the inclusion of the endpoints induces natural maps $\pi_0 \colon \Lambda \to \RR$ and $\pi_1 \colon \Lambda \to \RR$. Moreover, if $\mathbf{B}$ denotes the bar construction functor, the DG maps $F$ and $G$ induce maps of DG coalgebras $\mathbf{B}(F) \colon \mathbf{B}(\Ccal) \to \mathbf{B}(\Dcal)$ and $\mathbf{B}(G) \colon \mathbf{B}(\Ccal) \to \mathbf{B}(\Dcal)$. Then, an alternative definition of an $\A_{\infty}$-natural transformation from $F$ to $G$ is a DG coalgebra map
$$
\eta \colon \mathbf{B}(\Ccal) \to \mathbf{B}(\Dcal \otimes \Lambda),
$$
such that the map $\mathbf{B}(\id_{\Dcal} \otimes \pi_0) \circ \eta$ is equal to $\mathbf{B}(F)$, and the map $\mathbf{B}(\id_{\Dcal} \otimes \pi_1) \circ \eta$ is equal to $\mathbf{B}(G)$. The reason that this definition is equivalent to the previous one is the following. As described in Section~\ref{subsec:2.1}, an $\A_{\infty}$-natural transformation $\lambda \colon F \Rightarrow G$ is given by a sequence $K$-linear maps of degree $0$
$$
\lambda_n \colon (s\Ccal)^{\otimes n} \to \Dcal,
$$
satisfying the defining relations. Given such a sequence of $K$-linear maps, one can construct a DG coalgebra map $\eta \colon \mathbf{B}(\Ccal) \to \mathbf{B}(\Dcal \otimes \Lambda)$ as follows. Since $\mathbf{B}(\Dcal \otimes \Lambda)$ is cofree, $\eta$ is determined by a $K$-linear map
$$
\overline{\eta} \colon \mathbf{B}(\Ccal) \to s(\Dcal \otimes \Lambda).
$$
If we denote by $\langle 0 \rangle,\langle 1 \rangle,\langle 0 ,1 \rangle$ the three natural generators in $\Lambda$ so that
$$
\Dcal \otimes \Lambda = (\Dcal \otimes \langle 0 \rangle) \oplus (\Dcal \otimes \langle 1 \rangle) \oplus (\Dcal \otimes \langle 0 ,1 \rangle),
$$
then the projections of $\overline{\eta}$ onto $s(\Dcal \otimes \langle 0 \rangle)$ and $s(\Dcal \otimes \langle 1 \rangle)$ are determined by $F$ and $G$, respectively, while the maps $\overline{\eta} \colon (s \Ccal)^{\otimes n} \to (\Dcal \otimes \langle 0 ,1 \rangle)$ are determined by the $\lambda_n$. The relations satisfied by the maps $\lambda_n$ guarantee that $\eta$ is indeed a map of DG coalgebras. Reciprocally, it is clear that the maps $\lambda_n$ can be recovered from $\eta$ so that the two definitions are equivalent.
\end{remark}


\end{document}